\documentclass[11pt]{article}
\usepackage{amsmath,amsfonts,amsthm,amssymb, mathtools,makecell,multirow}
\usepackage{mathrsfs, graphicx,color,latexsym, tikz, calc, float,hyperref
%latexcad
}
\usepackage{enumerate}

\usepackage{float}

\usetikzlibrary{shadows}
\usetikzlibrary{patterns,arrows,decorations.pathreplacing}
\textwidth 160mm \textheight240mm \oddsidemargin=-0cm
\evensidemargin=0cm \topmargin=-1.5cm

\makeatletter
\newcommand*\bigcdot{\mathpalette\bigcdot@{.5}}
\newcommand*\bigcdot@[2]{\mathbin{\vcenter{\hbox{\scalebox{#2}{$\m@th#1\bullet$}}}}}
\makeatother

\def\l{\langle} \def\r{\rangle}

\newcommand\PSL{\mathrm{PSL}}
\newcommand\Aut{\mathrm{Aut}}

\newcommand\A{\mathrm{A}}

\newtheorem{theorem}{Theorem}[section]

\newtheorem{lemma}{Lemma}[section]

\newtheorem{corollary}{Corollary}[section]

\newtheorem{definition}{Definition}[section]

\allowdisplaybreaks

\title{\bf \Large The Terwilliger algebras of the group association schemes of non-abelian
finite groups admitting an abelian subgroup of index 2
\footnote{  This research was supported by  NSFC (Nos. 12271527, 12071484). E-mail addresses: yjlisy@163.com (J. Yang),
qhguo1001@csu.edu.cn (Q. Guo),  wjliu6210@126.com (W. Liu), fenglh@163.com (L. Feng, corresponding author).}}
\author{
{\small  Jing Yang, \ \ Qinghong Guo,\ \  Weijun Liu,\ \ Lihua Feng$\dagger$}}

\begin{document}
\maketitle
\begin{abstract}
In this paper, we determine the dimension of the Terwilliger algebras of non-abelian
finite groups admitting an abelian subgroup of index 2 by showing that they are triply transitive.
Moreover, we give a complete characterization of the Wedderburn components of the Terwilliger algebras of these groups. \\

\noindent{\bf AMS classification}: 05C50; 05C25\\

\noindent{\bf Keywords}: Terwilliger algebra; Wedderburn component;  Triply transitive
\end{abstract}

\section{Introduction}

 Association schemes serve as a foundational concept that integrates various domains, including coding theory, design theory, algebraic graph theory, and finite group theory.

\begin{definition}
Let $X$ be a finite set with cardinality $|X|$, and $R_0,R_1,\ldots,R_\ell$ be a collection of non-empty subsets of $X \times X$.
 Let $N_0, N_1, \ldots, N_\ell$ be matrices in $M_{|X|}(\mathbb{C})$ defined as $(N_i)_{xy}=1$ if $(x, y) \in R_i$, and $0$ otherwise.
 Then $(X,\{N_i\}_{0\le i \le \ell})$ is called an association scheme if
the following conditions are satisfied:
\begin{itemize}
\item[\rm{(i)}] $N_0=I_{|X|}$,
\item[\rm{(ii)}] $N_0+N_1+\cdots+N_\ell=J_{|X|}$,
\item[\rm{(iii)}] $N_i^{\top}=N_{i^{\prime}}$ for some $i^{\prime} \in\{0,1, \ldots, \ell\}$,
\item[\rm{(iv)}] for all $i, j \in$ $\{0,1, \ldots, \ell\}$, we have $N_i N_j=\sum_{k=0}^\ell p_{i j}^k N_k$,
\end{itemize}
where $I_{|X|}$ and $J_{|X|}$ are the identity matrix and the all ones matrix of size $|X|$, respectively, and $N^{\top}$ denotes the transpose of $N$, and $p_{i j}^k=|\{z\in X \mid  (x,z)\in R_i, (z, y)\in R_j\}|$ is a constant for all $i,j,k\in \{0,1,\ldots,\ell\}$ and $(x, y)\in R_k$.
\end{definition}

Terwilliger algebras were first introduced to study $P$- and $Q$-polynomial
association schemes \cite{TT1} and their connection to the orthogonal polynomials
in the Askey scheme. Since their introduction, the Terwilliger algebras of
various families of association schemes have been explored, and several
applications have been developed, one can see \cite{GB,NB,DA,AH,AH2,AH33,HL11,HL,HLA,HHH,KK,AA,YY,XJJ}.

We are now focusing on the Terwilliger algebras that arise from groups.
 Let $X=\{1,2,\ldots,n\}$, and $(X,\{N_i\}_{0\le i \le \ell})$ be an association scheme.
 The algebra $\mathcal{A}$ spanned by $\{N_0,N_1,\ldots,N_\ell\}$ over $\mathbb{C}$ is  a subalgebra of $M_n(\mathbb{C})$ and is
called the \textbf{Bose-Mesner algebra}.
 For $i\in \{0,1,\ldots,\ell\}$ and a fixed element $x\in X$, we define $E^*_{i,x}$ to be the $n\times n$ diagonal matrix whose $(u,u)$ entry is given by $(N_i)_{xu}$. The algebra $\mathcal{A^*}$ spanned by $\{E^*_{0,x},E^*_{1,x},\ldots,E^*_{\ell,x}\}$ over $\mathbb{C}$ is also
 a subalgebra of $M_n(\mathbb{C})$ and is
called the \textbf{dual Bose-Mesner algebra}.
  The Terwilliger algebra of the association scheme $(X,\{N_i\}_{0\le i \le \ell})$ with respect to $x$ is the subalgebra of $M_n(\mathbb{C})$ generated by $\mathcal{A}\cup\mathcal{A^*}$, denoted by $\mathcal{T}(x)$. The Terwilliger algebra of a group $G$, denoted by $\mathcal{T}(G)$, specifically refers to the Terwilliger algebra $\mathcal{T}(e)$, where $e$ is the identity element of $G$. The generators $\{E_{i,e}^*\mid 0\le i\le \ell\}$ of $\mathcal{T}(G)$ are denoted by $\{E_{i}^*\mid 0\le i\le \ell\}$.

 Given a finite group $G$ with identity element $e$ and a subset $S\subseteq G$. The subset $S$ is called a \textbf{conjugacy class} of $G$ if $S=\{gsg^{-1}\mid s\in S,g\in G\}$. Let $R=\{\mathrm{\mathbf{Cl}}_0=\{e\}, \mathrm{\mathbf{Cl}}_1, \ldots, \mathrm{\mathbf{Cl}}_\ell\}$ be the collection of all conjugacy classes of $G$.
 For each $i\in \{0, 1,...,\ell\}$, we define $N_i=\{(x,y)\in G\times G\mid yx^{-1}\in \mathrm{\mathbf{Cl}}_i\}$. It can be verified that
the pair $(G,\{N_i\}_{0\le i \le \ell})$ forms an association scheme, called the \textbf{group association scheme} of $G$.
By the Terwilliger algebra of a group $G$, we mean the Terwilliger algebra of the group association scheme of $G$.
The structure of the Terwilliger algebras of some groups has been studied in \cite{JJ,3,BM,NLB,MR,YZF}, including abelian groups, dihedral groups,
dicyclic groups, $A_5$, $S_5$, $A_6$, $S_6$, $\PSL(2,7)$.
Recently, Maleki \cite{MR} investigated the Terwilliger algebra of $C_n\rtimes C_2$. In \cite{YGZF,YZF}, the authors studied
the Terwilliger algebras of $\mathrm{T}_{n,k}$, $C_n\rtimes C_p$, $C_p\rtimes C_n$, $\mathrm{Dih}(A)$, and $G_2$.
The five families of metacyclic groups as defined in \cite{YGZF,YZF} are as follows:
\begin{enumerate}[(i)]
\item $\mathrm{T}_{n,s}=\left\langle a,b\mid a^{2n}=1,b^2=a^n, bab^{-1}=a^s \right\rangle$,
where $s\not\equiv1~(\bmod~2n)$, ${s}^2\equiv1~(\bmod~2n)$ and $\gcd(2n,s)=1$;
\item $C_n\rtimes C_p=\left\langle a,b\mid a^{n}=b^p=1, bab^{-1}=a^s \right\rangle$,
where $s\not\equiv1~(\bmod~n)$, ${s}^p\equiv1~(\bmod~n)$ and $\gcd(n,s)=1$;
\item $C_p\rtimes C_n=\left\langle a,b\mid a^{p}=b^n=1, bab^{-1}=a^s \right\rangle$,
where $s\not\equiv1~(\bmod~p)$, ${s}^n\equiv1~(\bmod~p)$ and $\gcd(p,s)=1$;
\item $\mathrm{Dih}(A)=\left\langle A, b \mid b^2=1, bab^{-1}=a^{-1}(a \in A)\right\rangle$, where $A$ is an abelian group;
\item $G_2=\l a, b\mid a^n=1, b^2=a^t,bab^{-1}=a^s\r$, where $s^2\equiv1~(\bmod~n)$, $s\not\equiv1~(\bmod~n)$, $\gcd(s,n)=1$, and $t(s-1)\equiv0~(\bmod~n)$.
\end{enumerate}

In this paper, motivated by the foregoing results, we consider the Terwilliger algebras of non-abelian
finite groups admitting an abelian subgroup of index 2. In addition, we determine the Wedderburn decomposition of the Terwilliger algebras of these groups. Our results extend the main results of \cite{JJ,BM,MR,YGZF,YZF}.

\section{Preliminaries}
In this section, we will present some fundamental knowledge in representation theory \cite{Res} and several lemmas which will be used in the sequel.

A (complex) \textbf{representation} of a given finite group $G$ is a group homomorphism $\rho$ : $G \rightarrow \mathrm{GL}(V)$, where $V$ is a finite dimensional complex vector space. The dimension $\operatorname{dim}(\rho)$ of $\rho$ is the dimension of $V$ and the \textbf{character} of $\rho$ is the map $\chi_\rho: G \rightarrow \mathbb{C}$ defined by $\chi_\rho(g)=\operatorname{Tr}(\rho(g))$ for $g \in G$.

A subspace $W$ of $V$ is invariant under $\rho$ if $\rho(g)(W)=W$ for $g \in G$, and $\rho$ is irreducible if $V \neq\{0\}$ and if $\{0\}, V$ are the only invariant subspaces of $V$. Letting $L^2(G)$ be the vector space of all functions $f: G \rightarrow \mathbb{C}$, together with the Hermitian inner product is given by
$$
\l f_1, f_2\r=\frac{1}{|G|} \sum_{g \in G} f_1(g) \overline{f_2(g)}
$$
for $f_1, f_2 \in L^2(G)$, the representation $\rho$ is \textbf{irreducible} if and only if $\l\chi_\rho ,\chi_\rho\r=1$.
Two representations $\rho_1: G \rightarrow \mathrm{GL}\left(V_1\right)$ and $\rho_2: G \rightarrow \mathrm{GL}\left(V_2\right)$ of $G$ are \textbf{equivalent} if there is an isomorphism $T: V_1 \rightarrow V_2$ such that, for every $g \in G$, we have $T \circ \rho_1(g)=$ $\rho_2(g) \circ T$. Classically, $\rho_1$ and $\rho_2$ are equivalent if and only if $\chi_{\rho_1}=\chi_{\rho_2}$. The group $G$ has only finitely many inequivalent irreducible representations, and their number equals the number of conjugacy classes of $G$. Moreover, if $\rho_1, \ldots, \rho_n$ are the inequivalent irreducible representations of $G$, then
$$
|G|=\left(\operatorname{dim}\left(\rho_1\right)\right)^2+\cdots+\left(\operatorname{dim}\left(\rho_n\right)\right)^2 .
$$
In particular, if $G$ is abelian, then every irreducible representation of $G$ has dimension 1.

Let $W=\{N_0,N_1,\ldots,N_\ell\}$  be an association scheme on the finite group $G$.
In \cite{BM}, Bannai and Munemasa considered the $\mathbb{C}$-vector space $\mathcal{T}_0(G)$ given by
$$\mathcal{T}_0(G) = \mathrm{Span}_{\mathbb{C}}\{E_i^*N_jE_k^*\mid 0\le i, j,k \le \ell\}$$
and showed that the dimension of $\mathcal{T}_0(G)$ is given by
$$
\mathrm{dim}_{\mathbb{C}} \mathcal{T}_0(G)=|\{(i,j,k)\mid p_{ij}^k\not=0\}|.
$$

\begin{lemma}\cite{NLB}\label{low}
Let $\mathrm{\mathbf{Cl}}_0, \mathrm{\mathbf{Cl}}_1,\ldots, \mathrm{\mathbf{Cl}}_\ell$ be all conjugacy classes of $G$.
Then $\mathcal{T}_0(G)\subseteq \mathcal{T}(G)$ and $\mathrm{dim}_{\mathbb{C}} \mathcal{T}_0(G)=|\{(i,j,k)\mid \mathrm{\mathbf{Cl}}_k\subseteq \mathrm{\mathbf{Cl}}_i\mathrm{\mathbf{Cl}}_j\}|$.
\end{lemma}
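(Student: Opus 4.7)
The plan is to prove the two assertions in turn, using the setup already introduced in the excerpt. The containment $\mathcal{T}_0(G)\subseteq \mathcal{T}(G)$ is essentially immediate: each spanning element $E_i^{*}N_jE_k^{*}$ is a product of three generators of $\mathcal{T}(G)$ (two diagonal idempotents from $\mathcal{A}^{*}$ and one adjacency matrix from $\mathcal{A}$), so it lies in $\mathcal{T}(G)$, and hence so does the span.

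For the dimension formula the strategy is to invoke the identity $\dim_{\mathbb{C}}\mathcal{T}_0(G)=|\{(i,j,k)\mid p_{ij}^k\neq 0\}|$ already quoted from \cite{BM} just before the lemma, and then translate the vanishing condition for $p_{ij}^k$ into a purely group-theoretic one. To do this I would fix a pair $(x,y)\in R_k$, specialise to $x=e$ so that $y\in \mathbf{Cl}_k$, and unwind the combinatorial definition as
\[
p_{ij}^k=\bigl|\{z\in G\mid z\in \mathbf{Cl}_i,\ yz^{-1}\in\mathbf{Cl}_j\}\bigr|.
\]
Setting $a=z$ and $b=yz^{-1}$, this count is nonzero exactly when $y$ admits a factorisation $y=ba$ with $a\in\mathbf{Cl}_i$ and $b\in\mathbf{Cl}_j$, i.e.\ when $y\in \mathbf{Cl}_j\mathbf{Cl}_i$.

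The final step is to promote ``nonempty intersection'' to ``contained''. Since each $\mathbf{Cl}_i$ is stable under conjugation by $G$, so is the product set $\mathbf{Cl}_j\mathbf{Cl}_i$; thus $\mathbf{Cl}_j\mathbf{Cl}_i$ is a union of conjugacy classes, and $\mathbf{Cl}_k\cap \mathbf{Cl}_j\mathbf{Cl}_i\neq \emptyset$ forces $\mathbf{Cl}_k\subseteq \mathbf{Cl}_j\mathbf{Cl}_i$. Therefore
\[
|\{(i,j,k)\mid p_{ij}^k\neq 0\}|=|\{(i,j,k)\mid \mathbf{Cl}_k\subseteq \mathbf{Cl}_j\mathbf{Cl}_i\}|,
\]
which is equal to $|\{(i,j,k)\mid \mathbf{Cl}_k\subseteq \mathbf{Cl}_i\mathbf{Cl}_j\}|$ via the involution $(i,j,k)\mapsto(j,i,k)$ on the index set. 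This yields the stated formula.

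I do not foresee a real obstacle. The only genuine content beyond rewriting the definitions is the observation that $\mathbf{Cl}_j\mathbf{Cl}_i$ is a union of conjugacy classes, which is what allows us to replace the existential ``$y$ lies in the product'' with the uniform ``the whole class $\mathbf{Cl}_k$ lies in the product.'' Ordering conventions ($\mathbf{Cl}_i\mathbf{Cl}_j$ versus $\mathbf{Cl}_j\mathbf{Cl}_i$) are absorbed by a harmless relabelling, as noted above.
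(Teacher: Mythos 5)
Your argument is correct. The paper itself gives no proof of this lemma (it is imported from the thesis \cite{NLB}, resting on the Bannai--Munemasa identity $\dim_{\mathbb{C}}\mathcal{T}_0(G)=|\{(i,j,k)\mid p_{ij}^k\neq 0\}|$ quoted just above it), and your write-up supplies exactly the intended translation: the containment is immediate from the generators, specialising the intersection number at $x=e$ identifies $p_{ij}^k\neq 0$ with $\mathbf{Cl}_k\cap\mathbf{Cl}_j\mathbf{Cl}_i\neq\emptyset$, and conjugation-invariance of the product set upgrades this to containment. Your ordering worry is even milder than you make it: for conjugacy classes one has $\mathbf{Cl}_i\mathbf{Cl}_j=\mathbf{Cl}_j\mathbf{Cl}_i$ as sets (since $ab=b(b^{-1}ab)$), so no relabelling involution is needed. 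The only content you take on faith is the quoted dimension identity itself, i.e.\ that the nonzero products $E_i^*N_jE_k^*$ are linearly independent; since the paper states that identity explicitly before the lemma, leaning on it is legitimate.
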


For a finite group $G$ and an element $g\in G$, the \textbf{centralizer} of $g$ in $G$ is denoted
by $\mathrm{C}_G(g)=\{x\in G\mid xg=gx\}$. Suppose $G$ acts on itself by conjugation. Identify
$G$ with the group of permutation matrices $\pi_g$, over all $g\in G$, defined by $(\pi_g)_{x,y} = 1$ if
$gxg^{-1} = y$ and $0$, otherwise. The \textbf{centralizer algebra} $\widetilde{\mathcal{T}}(G)$ consists of all $|G|\times |G|$ matrices
over $\mathbb{C}$ that commute with all $\pi_g$ for all $g\in G$.

\begin{lemma}\cite{BM}\label{up}
$\mathcal{T}(G)\subseteq \widetilde{\mathcal{T}}(G)$ and $\mathrm{dim}_{\mathbb{C}} \widetilde{\mathcal{T}}(G)=\frac{1}{|G|} \sum_{g \in G}\left|\mathrm{C}_G(g)\right|^2$.
\end{lemma}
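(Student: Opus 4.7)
The plan is to establish the two assertions separately. For the containment $\mathcal{T}(G)\subseteq \widetilde{\mathcal{T}}(G)$, I would use that $\widetilde{\mathcal{T}}(G)$ is itself a subalgebra of $M_{|G|}(\mathbb{C})$, so it suffices to check that each of the algebra generators $N_i$ and $E_i^*$ of $\mathcal{T}(G)$ commutes with every $\pi_g$. For $N_i$, a direct entry-wise computation gives $(\pi_g N_i \pi_g^{-1})_{xy}=(N_i)_{g^{-1}xg,\,g^{-1}yg}$, and this equals $(N_i)_{xy}$ because the defining condition $yx^{-1}\in \mathrm{\mathbf{Cl}}_i$ is invariant under simultaneous conjugation of $x$ and $y$. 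For $E_i^*$, the diagonal entry of $\pi_g E_i^* \pi_g^{-1}$ at $u$ reduces to $(N_i)_{e,\,g^{-1}ug}$, which is $1$ iff $g^{-1}ug\in \mathrm{\mathbf{Cl}}_i$, equivalently iff $u\in \mathrm{\mathbf{Cl}}_i$, giving $\pi_g E_i^* \pi_g^{-1}=E_i^*$.

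For the dimension formula, I would invoke the standard character-theoretic description of the commutant of a representation. The matrices $\{\pi_g: g\in G\}$ are exactly the image of the conjugation representation $\rho$ of $G$ on $\mathbb{C}^{|G|}$, and by definition $\widetilde{\mathcal{T}}(G)=\mathrm{End}_G(\mathbb{C}^{|G|})$. Decomposing $\rho\cong\bigoplus_i m_i\rho_i$ into irreducibles and applying Schur's lemma yields
\[
\dim_{\mathbb{C}}\widetilde{\mathcal{T}}(G)=\sum_i m_i^{2}=\langle\chi_\rho,\chi_\rho\rangle.
\]
Since $\pi_g$ is a permutation matrix, its trace counts the fixed points of conjugation by $g$, which is precisely $|\mathrm{C}_G(g)|$. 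Substituting $\chi_\rho(g)=|\mathrm{C}_G(g)|$ (a real number) into the inner product defined in the preliminaries gives the claimed identity.

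The argument is essentially bookkeeping: the main conceptual step is recognizing $\widetilde{\mathcal{T}}(G)$ as the commutant of the conjugation representation, after which Schur's lemma and character orthogonality deliver the dimension formula. The only mild pitfall is keeping the left/right conventions straight in the permutation action $\pi_g$; beyond this, no part of the proof presents a genuine obstacle.
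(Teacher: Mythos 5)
Your proof is correct. The paper states this lemma as a citation to Bannai--Munemasa and gives no proof of its own, so there is nothing internal to compare against; your argument --- checking that the generators $N_i$ and $E_i^*$ commute with every $\pi_g$ (the latter using that the base point is the identity, so it is fixed by conjugation), and then computing $\dim_{\mathbb{C}}\widetilde{\mathcal{T}}(G)=\langle\chi_\rho,\chi_\rho\rangle$ via Schur's lemma with $\chi_\rho(g)=|\mathrm{C}_G(g)|$ --- is the standard proof and matches the one in the cited reference.
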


By Lemmas \ref{low} and \ref{up}, we have $\mathcal{T}_0(G)\subseteq\mathcal{T}(G)\subseteq \widetilde{\mathcal{T}}(G)$.
The group $G$ is called \textbf{triply transitive} if $\mathcal{T}_0(G)=\widetilde{\mathcal{T}}(G)$.

If $G$ is triply transitive, then $\mathcal{T}(G)= \widetilde{\mathcal{T}}(G)$. Since $\mathcal{T}(G)$ is a semisimple algebra,
$\widetilde{\mathcal{T}}(G)$ is a semisimple algebra. Then there exist primitive central idempotents $e_0, e_1,\ldots, e_\ell $
such that
$$
\widetilde{\mathcal{T}}(G)=\bigoplus_{j=0}^{\ell} \widetilde{\mathcal{T}}(G) e_j.
$$

Consider the representation $\Phi: G \rightarrow G L_{|G|}(\mathbb{C})$ such that for any $g \in G$, the matrix $\Phi(g)$ is defined by
$$
\Phi(g)_{u, v}=\left\{\begin{array}{ll}
1, & \text { if } v=gu{g }^{-1}, \\
0, & \text { otherwise, }
\end{array}\right.
$$
for any $u, v \in G$. The representation $\Phi$ is called the \textbf{permutation representation} of the action of $G$ by conjugation on itself. Let $\psi$  be the character corresponding to $\Phi$.
Maleki \cite{MR} showed that the primitive central idempotent $e_j=\frac{\chi_j(1)}{|G|}\sum_{g\in G}\overline{\chi_j(g)}\Phi(g)$
 and $\widetilde{\mathcal{T}}(G) e_j$ is isomorphic to $M_{d_j}(\mathbb{C})$ for each $j\in \{0,1,2,\ldots, \ell\}$.
 Let $g_0, g_1,\ldots, g_\ell$ be a system of representatives of the conjugacy classes
$\mathrm{\mathbf{Cl}}_0, \mathrm{\mathbf{Cl}}_1,\ldots, \mathrm{\mathbf{Cl}}_\ell$, and $\overline{c}$ denotes the complex conjugate of $c\in \mathbb{C}$. Let $\{\chi_0,\chi_1,...,\chi_\ell\}$ be the set of all inequivalent irreducible characters of $G$. Then we have the following lemma
holds.

\begin{lemma}\cite{MR}\label{di}
With the notations described as above, we have
$$
\psi=\sum_{j=0}^\ell d_j\chi_j, \qquad \widetilde{\mathcal{T}}(G)=\bigoplus_{j=0}^{\ell} M_{d_j}(\mathbb{C}),
$$
where $d_j=\sum_{k=0}^\ell\overline{\chi_j(g_k)}$.
\end{lemma}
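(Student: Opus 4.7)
The plan is to compute the decomposition of the character $\psi$ of the conjugation representation $\Phi$ into irreducibles, and then read off the Wedderburn structure of the commutant $\widetilde{\mathcal{T}}(G)$ via Schur's lemma.

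First I would identify $\psi(g)$ concretely. Since $\Phi(g)$ is the permutation matrix of the map $u \mapsto gug^{-1}$, its trace counts the fixed points of conjugation by $g$, giving
$$\psi(g) = |\{u \in G : gug^{-1} = u\}| = |\mathrm{C}_G(g)|.$$
The representation $\Phi$ is completely reducible over $\mathbb{C}$, so I can write $\psi = \sum_{j=0}^\ell m_j \chi_j$ for some non-negative integers $m_j$. By character orthogonality,
$$m_j = \l \psi, \chi_j \r = \frac{1}{|G|}\sum_{g \in G}|\mathrm{C}_G(g)|\,\overline{\chi_j(g)}.$$
I would then collect the sum over conjugacy classes and invoke the orbit-stabilizer identity $|\mathrm{\mathbf{Cl}}_k|\cdot|\mathrm{C}_G(g_k)|=|G|$ to obtain
$$m_j = \frac{1}{|G|}\sum_{k=0}^\ell |\mathrm{\mathbf{Cl}}_k|\cdot|\mathrm{C}_G(g_k)|\,\overline{\chi_j(g_k)} = \sum_{k=0}^\ell \overline{\chi_j(g_k)} = d_j,$$
which establishes the first identity $\psi = \sum_j d_j \chi_j$ together with the claimed formula for $d_j$.

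Next I would address the Wedderburn decomposition. By definition, $\widetilde{\mathcal{T}}(G)$ consists of all $|G|\times|G|$ matrices commuting with every $\pi_g = \Phi(g)$, so it coincides with the endomorphism algebra $\mathrm{End}_{\mathbb{C} G}(V)$ where $V = \mathbb{C}^{|G|}$ is the $\mathbb{C} G$-module afforded by $\Phi$. Using the character decomposition from the previous step, $V$ decomposes into isotypic components $V \cong \bigoplus_j V_j$, where $V_j$ is a direct sum of $d_j$ copies of the irreducible representation $\rho_j$ with character $\chi_j$.

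Finally I would invoke Schur's lemma: any $G$-homomorphism between non-isomorphic irreducibles vanishes, and $\mathrm{End}_{\mathbb{C} G}(\rho_j)=\mathbb{C}$. Consequently $\mathrm{End}_{\mathbb{C} G}(V_j) \cong M_{d_j}(\mathbb{C})$, and there are no cross-terms between distinct isotypic components, giving
$$\widetilde{\mathcal{T}}(G) = \mathrm{End}_{\mathbb{C} G}(V) \cong \bigoplus_{j=0}^\ell M_{d_j}(\mathbb{C}).$$
The only mild subtlety, and where I would be most careful, is that some $d_j$ may equal zero; with the convention $M_0(\mathbb{C}) = 0$ the formula is uniformly valid, and the nontrivial Wedderburn components correspond exactly to those $\chi_j$ actually appearing in the conjugation character $\psi$. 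Beyond this bookkeeping there is no genuine obstacle: the argument is a direct character-theoretic computation followed by a standard application of Schur's lemma to the conjugation representation.
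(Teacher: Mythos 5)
Your proof is correct: the fixed-point count $\psi(g)=|\mathrm{C}_G(g)|$, the orthogonality computation giving $d_j=\sum_k\overline{\chi_j(g_k)}$, and the identification of $\widetilde{\mathcal{T}}(G)$ with $\mathrm{End}_{\mathbb{C}G}(\mathbb{C}^{|G|})$ followed by Schur's lemma is exactly the standard argument for this result. The paper itself gives no proof (Lemma~\ref{di} is quoted from the reference \cite{MR}), and your argument, including the remark that $d_j=0$ is possible and harmless, is the one the cited source uses.
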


Let $A$ be a finite abelian group of
order $n\ge3$, and $f\in \Aut(A)$ be of order $2$. Let $y\in A$ be such that $f(y) = y$, and $D_2$ be a non-abelian finite group admitting an abelian subgroup of index $2$. Then $D_2$ admits a presentation
\begin{align}
\begin{aligned}
D_2=\l A, b\mid b^2=y,bab^{-1}=f(a),a\in A\r.
\end{aligned}
\end{align}

For an abelian group $A$, we have $A\cong S_{p_1}\times S_{p_2}\times\cdots \times S_{p_k}$, where $S_{p_1},S_{p_2},\ldots,S_{p_k}$ are
all the Sylow subgroups of $A$. Note that $S_{p_i}\cong C_{p_i^{e_1}}\times C_{p_i^{e_2}}\times \cdots \times C_{p_i^{e_t}}$
with $e_1\le e_2 \le \cdots \le e_t$ and $|S_{p_i}|=p_i^{\sum_{j=1}^te_j}$.
Hence $A$ has the following form
\begin{align}
\begin{aligned}
A&\cong C_{p_1^{e_1}}\times \cdots \times C_{p_\lambda^{e_\lambda}}\times C_{p_{\lambda+1}^{e_{\lambda+1}}}
\times \cdots \times C_{p_{\lambda+\mu}^{e_{\lambda+\mu}}}\\
&\cong \l a_1\r \times\cdots \times \l a_\lambda\r\times
\l a_{\lambda+1}\r\times \cdots \times \l a_{\lambda+\mu}\r.
\end{aligned}
\end{align}
where $p_i$, $i=1,2,\ldots,\lambda+\mu$ are prime and $p_i=2$ if and only if $i=1,\ldots,\lambda$, $\lambda+\mu\not=0$.
Since $f\in \Aut(A)$, we have $f((a_1,a_2,\ldots,a_{\lambda+\mu}))=(a_1^{s_1},a_2^{s_2},\ldots,a_{\lambda+\mu}^{s_{\lambda+\mu}})$, where $s_1,s_2,\ldots,s_{\lambda+\mu}$ are integers.
Let $d_i=\gcd(s_i-1,p_i^{e_i})$ and $n_i=\frac{p_i^{e_i}}{d_i}$ for $i=1,2,\ldots,\lambda+\mu$. 
Let
$$
C(d_i)=\{a_i^{n_ij}\mid 1\le j\le d_i\}, \quad C(n_i)=\{a_i^{d_ij}\mid 1\le j\le n_i\},\quad A^{\prime}=C(d_1)\times C(d_2)\times \cdots \times C(d_{\lambda+\mu}).
$$
Then $B:=\{f(a)a^{-1}\mid a\in A\}=C(n_1)\times C(n_2)\times \cdots \times C(n_{\lambda+\mu})$ and $A/B\cong A^{\prime}$.
\begin{lemma}\label{g2}
Let $a=(a_1^{t_1},\ldots,a_{\lambda+\mu}^{t_{\lambda+\mu}})\in A$ and $D_2$ be the non-abelian group with the presentation given by (1).
Then
\begin{enumerate}
\item[\rm{(i)}]if $a\in A^{\prime}$, then $\mathrm{\mathbf{Cl}}(a)=\{a\}$,
\item[\rm{(ii)}]if $a\notin A^{\prime}$, then $\mathrm{\mathbf{Cl}}(a)=\{a,f(a)\}$,
\item[\rm{(iii)}]$\mathrm{\mathbf{Cl}}(ab)= Bab$.
\end{enumerate}
\end{lemma}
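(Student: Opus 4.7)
The plan is to compute the conjugation orbits directly in $D_2$, splitting each case by whether the conjugating element lies in $A$ or in the coset $Ab$, and then identifying the resulting set with the notation introduced in the excerpt.

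For parts (i) and (ii), I would fix $a\in A$ and compute $gag^{-1}$ for $g\in D_2$. Since $A$ is abelian, $cac^{-1}=a$ for every $c\in A$. For $g=cb\in Ab$, using $bab^{-1}=f(a)$ together with $A$ being abelian gives $(cb)a(cb)^{-1}=c\,f(a)\,c^{-1}=f(a)$. Hence $\mathrm{\mathbf{Cl}}(a)\subseteq\{a,f(a)\}$, with equality to $\{a\}$ precisely when $f(a)=a$. It then remains to show $f(a)=a\iff a\in A'$. Writing $a=(a_1^{t_1},\ldots,a_{\lambda+\mu}^{t_{\lambda+\mu}})$, the equation $f(a)=a$ translates coordinatewise to $t_i(s_i-1)\equiv 0\pmod{p_i^{e_i}}$. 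Since $d_i=\gcd(s_i-1,p_i^{e_i})$ and $n_i=p_i^{e_i}/d_i$, this is equivalent to $n_i\mid t_i$ for every $i$, i.e. $a_i^{t_i}\in C(d_i)$ for every $i$, which is exactly the membership $a\in A'$.

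For part (iii), I would prove both inclusions. For $\mathrm{\mathbf{Cl}}(ab)\subseteq Bab$, I would compute $c(ab)c^{-1}$ for $c\in A$ using $bc^{-1}=f(c)^{-1}b$ and the abelianness of $A$, obtaining
$$c(ab)c^{-1}=(cf(c)^{-1})\,ab.$$
Since $B$ is a subgroup of $A$ and $cf(c)^{-1}=(f(c)c^{-1})^{-1}\in B$, this lies in $Bab$. Next, for $g=cb$, using $b^2=y$ (with $y\in A$ and $f(y)=y$), a parallel calculation yields
$$(cb)(ab)(cb)^{-1}=f(a)\,(cf(c)^{-1})\,b=(f(a)a^{-1})(cf(c)^{-1})\,ab,$$
and since $f(a)a^{-1}\in B$, the right-hand side again lies in $Bab$. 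For the reverse inclusion $Bab\subseteq\mathrm{\mathbf{Cl}}(ab)$, I would observe that the map $\phi:A\to A$ defined by $\phi(c)=cf(c)^{-1}$ is a homomorphism (because $A$ is abelian) whose image equals $B^{-1}=B$. Consequently, as $c$ varies over $A$, the elements $c(ab)c^{-1}=\phi(c)\,ab$ sweep out all of $Bab$.

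The main obstacle will be the careful bookkeeping with the relations $bab^{-1}=f(a)$ and $b^2=y$ in the second half of (iii); in particular one must track that $y$ cancels when simplifying $(cb)^{-1}=y^{-1}f(c)^{-1}b$ inside the conjugation $cb\cdot ab\cdot(cb)^{-1}$, and one must recognize both $cf(c)^{-1}$ and $f(a)a^{-1}$ as elements of the subgroup $B$. Once these two identifications are in place the computation collapses to the abelian arithmetic inside $A$ already used in parts (i) and (ii).
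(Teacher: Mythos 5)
Your proposal is correct and follows essentially the same route as the paper: compute conjugates directly, splitting the conjugating element over $A$ and the coset $Ab$, and identify the resulting sets with $\{a\}$, $\{a,f(a)\}$, and $Bab$. You merely supply a bit more detail than the paper does — the coordinatewise verification that $f(a)=a$ iff $a\in A'$, and the explicit surjectivity of $c\mapsto cf(c)^{-1}$ onto $B$ for the reverse inclusion in (iii) — both of which check out.
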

\begin{proof}
For any $a,a^{\prime}\in A$, we have
$${a^{\prime}}^{-1}aa^{\prime}=a, \qquad (a^{\prime}b)^{-1}a(a^{\prime}b)=f(a).$$
Then the conjugate class of $a$ is  $\{a\}$ or $\{a,f(a)\}$. Note that $a=f(a)$ if $a\in A^{\prime}$, and $a\not=f(a)$
if $a\notin A^{\prime}$. Hence the conjugate class of $a$ is $\{a\}$ if $a\in A^{\prime}$, and
the conjugate class of $a$ is $\{a,f(a)\}$ if $a\notin A^{\prime}$.

Similarly, for any $a,a^{\prime}\in A$, we have
$${a^{\prime}}^{-1}aba^{\prime}={f(a^{\prime})}{a^{\prime}}^{-1}ab, \qquad 
(a^{\prime}b)^{-1}ab(a^{\prime}b)=(f({a^{\prime}}^{-1})a^{\prime})(f(a)a^{-1})ab.$$
Hence $\mathrm{\mathbf{Cl}}(ab)=Bab$.
This completes the proof.
\end{proof}

Let $d=\prod_{i=1}^{\lambda+\mu}d_i$. Clearly, there are $d, \frac{n-d}{2}, d$ conjugate classes like
$\{a\}, \{a,f(a)\}$, $\mathbf{Cl}(ab)$, respectively. Denote by
$$\{x_1,x_2,\ldots,x_d\},\qquad\{y_1,y_2,\ldots,y_{\frac{n-d}{2}}\},\qquad\{z_1b,z_2b,\ldots,z_db\}$$
a system of representatives of the conjugacy class like $\{a\}$, $\{a,a^{s}\}$, $\mathrm{\mathbf{Cl}}(ab)$, respectively.
Then $\{x_1,x_2,\ldots,x_d\}=A^{\prime}$ and $
\{z_1,z_2,\ldots,z_d\}=\{a=(a_1^{t_1},\ldots,a_{\lambda+\mu}^{t_{\lambda+\mu}})\in A\mid 1\le t_i\le d_i \text{ for } 1\le i\le \lambda+\mu \}$.
 By Lemma \ref{cc1}, we can describe all conjugacy classes of $D_2$ more clearly.

\begin{lemma}\label{cc1}
All the conjugacy classes of $D_2$ are as follows:
\begin{itemize}
\item[\rm{(i)}] $A_j=\mathrm{\mathbf{Cl}}(x_j)=\{x_j\}$ for $1\le j\le d$,
\item[\rm{(ii)}] $B_j=\mathrm{\mathbf{Cl}}(y_j)=\{y_j,f(y_j)\}$ for $1\le j\le \frac{n-d}{2}$,
\item[\rm{(iii)}] $C_j=\mathrm{\mathbf{Cl}}(z_jb)$ for $1\le j\le d$.
\end{itemize}
Moreover, $|A_j|=1$ and $|B_j|=2$ and $|C_j|=\frac{n}{d}$.
\end{lemma}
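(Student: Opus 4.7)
The plan is to read off the three families of conjugacy classes directly from Lemma \ref{g2}, using the chosen systems of representatives and a counting argument for the sizes. I would organize the proof by splitting $D_2=A\cup Ab$ and treating each coset separately.

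First, for the $A$-part, I would invoke Lemma \ref{g2}(i) and (ii). By (i), each $a\in A'$ forms a singleton conjugacy class; taking $A'=\{x_1,\dots,x_d\}$ as representatives (which is valid since $|A'|=\prod_{i=1}^{\lambda+\mu}d_i=d$) gives the family $A_j=\{x_j\}$ with $|A_j|=1$, establishing (i). By (ii), each $a\in A\setminus A'$ has a conjugacy class of the form $\{a,f(a)\}$. Since $A\setminus A'$ has $n-d$ elements, and $f$ has order $2$ and fixes no point of $A\setminus A'$ (for if $f(a)=a$ then $a\in A'$ by the very definition used in Lemma \ref{g2}), these classes partition $A\setminus A'$ into $(n-d)/2$ orbits, yielding the family $B_j=\{y_j,f(y_j)\}$ with $|B_j|=2$, establishing (ii).

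Next, for the $Ab$-part, I would apply Lemma \ref{g2}(iii): every element $ab\in Ab$ satisfies $\mathrm{\mathbf{Cl}}(ab)=Bab$. Thus the conjugacy classes inside $Ab$ are precisely the cosets of $B$ in $Ab$, and their number equals $[A:B]=|A'|=d$. Using the stated representatives $z_1,\dots,z_d$, this produces the family $C_j=\mathrm{\mathbf{Cl}}(z_jb)=Bz_jb$ and establishes (iii). The size $|C_j|=|B|=\prod_{i=1}^{\lambda+\mu}n_i=\prod_{i=1}^{\lambda+\mu}\frac{p_i^{e_i}}{d_i}=\frac{n}{d}$ then follows immediately.

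Finally, I would confirm that the classes $A_j$, $B_j$, $C_j$ exhaust $D_2$ by a size check: $d\cdot 1+\tfrac{n-d}{2}\cdot 2+d\cdot\tfrac{n}{d}=d+(n-d)+n=2n=|D_2|$. Since these classes are disjoint (the $A_j$ and $B_j$ lie in $A$ and the $C_j$ lie in $Ab$, and within each coset of $A$ the classes are disjoint by construction), this confirms they form the complete list of conjugacy classes of $D_2$. I do not anticipate a real obstacle here: Lemma \ref{g2} already carries the structural content, and this lemma is essentially a bookkeeping statement that organizes those classes by representatives and records their cardinalities.
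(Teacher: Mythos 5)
Your proof is correct and follows the same route the paper intends: the paper gives no explicit proof of this lemma, treating it as an immediate consequence of Lemma \ref{g2} together with the counting of representatives ($d$, $\frac{n-d}{2}$, $d$) stated just before it, which is exactly what you carry out. Your added exhaustion check $d\cdot 1+\frac{n-d}{2}\cdot 2+d\cdot\frac{n}{d}=2n$ is a harmless bonus.
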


In order to get the character table of $D_2$, we first give the characters of $A$.

\begin{lemma}\cite{Res}\label{abe}
Let $a=(a_1^{t_1},\ldots,a_{\lambda+\mu}^{t_{\lambda+\mu}})\in A$. Then we have
$$
\chi_{I}(a)=\prod_{i=1}^{\lambda+\mu}\zeta_{p_i^{e_i}}^{t_ir_i},
$$
where $I=(r_1,\ldots,r_{\lambda+\mu})$, $0\le r_i\le p_i^{e_i}-1$ and $\zeta_{p_i^{e_i}}$ is a primitive $p_i^{e_i}$-th root of unity.
\end{lemma}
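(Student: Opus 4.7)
The statement is a standard computation for the character table of a finite abelian group, so the plan is to assemble the standard ingredients already recalled in Section 2 rather than to discover anything new. The plan is to (1) reduce to cyclic factors, (2) write down the characters of each cyclic factor explicitly, (3) take tensor products to get the characters of $A$, and (4) verify that the indexing set $I=(r_1,\ldots,r_{\lambda+\mu})$ with $0\le r_i\le p_i^{e_i}-1$ exhausts all inequivalent irreducible characters of $A$.

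More concretely, I would start from the fact, recalled in the Preliminaries, that every irreducible representation of an abelian group is one-dimensional, so the characters of $A$ are exactly the group homomorphisms $A\to\mathbb{C}^\times$. Since $A\cong\langle a_1\rangle\times\cdots\times\langle a_{\lambda+\mu}\rangle$ with $\langle a_i\rangle\cong C_{p_i^{e_i}}$, any such homomorphism is determined by its values on the generators $a_i$, and each $\chi(a_i)$ must be a $p_i^{e_i}$-th root of unity. Fixing a primitive $p_i^{e_i}$-th root of unity $\zeta_{p_i^{e_i}}$, the choices are $\chi(a_i)=\zeta_{p_i^{e_i}}^{r_i}$ for $0\le r_i\le p_i^{e_i}-1$. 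Labeling the character by $I=(r_1,\ldots,r_{\lambda+\mu})$ and extending multiplicatively gives
\[
\chi_I\bigl((a_1^{t_1},\ldots,a_{\lambda+\mu}^{t_{\lambda+\mu}})\bigr)=\prod_{i=1}^{\lambda+\mu}\chi_I(a_i)^{t_i}=\prod_{i=1}^{\lambda+\mu}\zeta_{p_i^{e_i}}^{t_ir_i},
\]
which is the desired formula.

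To close the argument I would verify that these $\chi_I$ are pairwise inequivalent and that we have produced all of them. Inequivalence is immediate because distinct tuples $I$ give homomorphisms differing on some generator $a_i$. Completeness follows by counting: the number of tuples is $\prod_{i=1}^{\lambda+\mu}p_i^{e_i}=|A|$, which by the identity $|G|=\sum(\dim\rho_j)^2$ (each summand equal to $1$ in the abelian case) and the equality of the number of irreducible characters with the number of conjugacy classes (here $|A|$ since every element is central) must equal the total number of inequivalent irreducible characters of $A$.

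There is no real obstacle: the only thing to be careful about is the indexing convention for $\zeta_{p_i^{e_i}}$, which must be fixed at the outset so the formula is well-defined, and the verification that the tensor product of characters of the cyclic factors is again a character of $A$, which is immediate from the definition of character on a direct product. Since the lemma is cited from \cite{Res}, I would keep the write-up short and essentially refer to the standard construction of characters of finite abelian groups via Pontryagin duality on each cyclic factor.
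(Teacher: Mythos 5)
Your proposal is correct: it is the standard argument that irreducible characters of an abelian group are homomorphisms to $\mathbb{C}^{\times}$ determined by root-of-unity values on the generators of the cyclic factors, together with the counting argument showing all $|A|$ characters are obtained. The paper itself gives no proof of this lemma (it is cited directly from \cite{Res}), and your write-up is exactly the textbook derivation that the citation points to, so there is nothing to contrast.
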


The following lemma provides all the irreducible representations of $D_2$.

\begin{lemma}\cite{AB}\label{dex2}
 Let $A$ be an abelian group and $B=\{f(a)a^{-1}\mid a\in A\}$ be a subgroup of $D_2$. Then all irreducible representations of $D_2$ are as follows:
\begin{itemize}
\item[\rm{(i)}]  if $\sigma : A/B \rightarrow \mathbb{C}^*$ is a one-dimensional representation of $A/B$,
then the one-dimensional representations $\rho_1$ and $\rho_2$ of $D_2$ are morphisms of the form
\begin{itemize}
\item[\rm{(i)}]$\rho_1(b)=\sqrt{\sigma(\widetilde {y})}$ and $\rho_1(a)=\sigma(\widetilde {a})$ for $a\in A,$
\item[\rm{(ii)}]$\rho_2(b)=-\sqrt{\sigma(\widetilde {y})}$ and $\rho_2(a)=\sigma(\widetilde {a})$ for $a\in A,$
  \end{itemize}
  where $\widetilde {x}$ denotes the reduction modulo $B$ of $x\in A$.
\item[\rm{(ii)}] if $\sigma : A\rightarrow \mathbb{C}^*$ is a one-dimensional representation
of $A$ with $B\not\subseteq \mathrm{ker}(\sigma)$, then the two-dimensional representation $R_{\sigma}$ of $D_2$ is a morphism of the form
$$
R_\sigma(a)=\begin{pmatrix}\sigma(a)&0\\0&\sigma(f(a))\end{pmatrix}\quad\quad\:R_\sigma(ba)=\begin{pmatrix}0&\sigma(yf(a))\\\sigma(a)
&0\end{pmatrix}\:(a\in A).
$$
\end{itemize}
In particular, $D_2$ has exactly $2[A : B]$ inequivalent one-dimensional representations and $\frac{|A|-[A:B]}{2}$ inequivalent two-dimensional representations, where $[A : B]$ is the index of $B$ in $A$.
\end{lemma}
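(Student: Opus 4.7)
My plan is to apply the standard Clifford-theoretic decomposition for the normal abelian subgroup $A$ of index $2$ in $D_2$. The key observation is that $f$ acts on $\widehat{A}$ by $\sigma\mapsto\sigma\circ f$, and $\sigma$ is $f$-fixed iff $\sigma(f(a)a^{-1})=1$ for every $a\in A$, i.e., $B\subseteq\ker\sigma$. Hence the $f$-fixed characters of $A$ are precisely those factoring through $A/B$, and are in bijection with $\widehat{A/B}$ (contributing $[A:B]$ of them), while the remaining $|A|-[A:B]$ characters of $A$ fall into $\tfrac{|A|-[A:B]}{2}$ orbits of size $2$.

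For the one-dimensional representations, any $\rho\in\widehat{D_2}$ satisfies $\rho(f(a))=\rho(bab^{-1})=\rho(a)$, forcing $\rho|_A$ to be trivial on $B$, so $\rho$ factors through $D_2/B$. In this quotient, $b$ centralizes $A/B$ and the image of $b^2$ is $\widetilde y$, so $D_2/B$ is abelian. Extending $\sigma\in\widehat{A/B}$ to $D_2/B$ therefore reduces to choosing $\rho(b)\in\mathbb{C}^*$ with $\rho(b)^2=\sigma(\widetilde y)$, giving $\rho(b)=\pm\sqrt{\sigma(\widetilde y)}$. A short verification (using $ab\cdot a'b=a\,f(a')\,y$) confirms that both choices yield valid group homomorphisms; these are exactly $\rho_1,\rho_2$ of item (i), producing $2[A:B]$ inequivalent one-dimensional characters.

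For the two-dimensional representations, I would induce. Given $\sigma\in\widehat{A}$ with $B\not\subseteq\ker\sigma$, set $R_\sigma=\mathrm{Ind}_A^{D_2}\sigma$. On the basis $\{1\otimes v,\,b\otimes v\}$ of the induced module, using $b^{-1}ab=f(a)$ and $b^2=y$, a direct computation recovers the matrix formulas in item (ii); in particular $ba\cdot(1\otimes v)=\sigma(a)(b\otimes v)$ and $ba\cdot(b\otimes v)=yf(a)\otimes v=\sigma(yf(a))(1\otimes v)$. Mackey's irreducibility criterion for a normal subgroup of index $2$ then says $R_\sigma$ is irreducible iff $\sigma\circ f\neq\sigma$, which is exactly $B\not\subseteq\ker\sigma$, and also yields $R_\sigma\cong R_{\sigma\circ f}$; hence the inequivalent two-dimensional irreducibles are parameterized by the $f$-orbits on $\widehat{A}\setminus\widehat{A/B}$, numbering $\tfrac{|A|-[A:B]}{2}$.

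Completeness of the list then follows from the dimension identity $\sum_\rho\dim(\rho)^2=|G|$: our list contributes $2[A:B]\cdot 1+\tfrac{|A|-[A:B]}{2}\cdot 4=2|A|=|D_2|$, exhausting $\widehat{D_2}$ and matching the total number of conjugacy classes found in Lemma \ref{cc1}. I do not anticipate a serious obstacle; the most delicate step is the correct identification of the induced action to obtain the stated matrices, together with the invocation of the Mackey/Clifford dichotomy, both of which are routine.
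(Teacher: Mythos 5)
Your argument is correct. Note that the paper itself offers no proof of this lemma---it is quoted verbatim from \cite{AB}---so there is nothing internal to compare against; your Clifford--Mackey treatment (one-dimensional representations factor through $D_2/B$ since $B=\{f(a)a^{-1}\}$ is exactly the commutator subgroup, two-dimensional ones arise as $\mathrm{Ind}_A^{D_2}\sigma$ for $\sigma$ not fixed by $f$, with irreducibility and the identification $R_\sigma\cong R_{\sigma\circ f}$ given by the index-two Mackey criterion, and completeness by the sum-of-squares identity $2[A:B]+4\cdot\frac{|A|-[A:B]}{2}=2|A|$) is the standard derivation and reproduces the stated matrices and counts exactly.
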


By Lemmas \ref{abe} and \ref{dex2}, we can obtain the irreducible representations of $D_2$ more clearly.
Let $a=(a_1^{t_1},\ldots,a_{\lambda+\mu}^{t_{\lambda+\mu}})$ and $y=(a_1^{v_1},\ldots,a_{\lambda+\mu}^{v_{\lambda+\mu}})$.
Let $I=(\ell_1,\ldots,\ell_{\lambda+\mu})$ with $0\le \ell_j\le d_j-1$ and $J=(k_1,\ldots,k_{\lambda+\mu})$ with $0\le k_j\le p_j^{e_j}-1$.
\begin{itemize}
\item[(i)] One-dimensional representations: The one-dimensional irreducible representations are one of the following:

the morphism $\phi_{I, 1}: D_2\rightarrow \mathbb{C}^*$ such that $$\phi_{I,1}(a)=\prod_{j=1}^{\lambda+\mu}\zeta_{d_j}^{\ell_jt_j}
,\qquad \phi_{I, 1}(b)=\prod_{j=1}^{\lambda+\mu}\zeta_{d_j}^{\frac{\ell_jv_j}{2}},$$

the morphism $\phi_{I, 2}: D_2\rightarrow \mathbb{C}^*$ such that $$\phi_{I,2}(a)=\prod_{j=1}^{\lambda+\mu}\zeta_{d_j}^{\ell_jt_j}
,\qquad \phi_{I, 2}(b)=-\prod_{j=1}^{\lambda+\mu}\zeta_{d_j}^{\frac{\ell_jv_j}{2}}.$$

\item[(ii)] Two-dimensional representations:  A two-dimensional irreducible representation of $D_2$ is a morphism of the form
$$
\rho_J: D_2 \rightarrow \mathrm{GL}_2(\mathbb{C})
$$
such that
$$
\rho_J(a)=\begin{pmatrix}
\prod_{j=1}^{\lambda+\mu}\zeta_{p_j^{e_j}}^{k_jt_j} & 0 \\
0 & \prod_{j=1}^{\lambda+\mu}\zeta_{p_j^{e_j}}^{k_jt_js_j}
\end{pmatrix} \quad \text { and } \quad
\rho_J(b)=\begin{pmatrix}
0 & \prod_{j=1}^{\lambda+\mu}\zeta_{p_j^{e_j}}^{k_jv_j} \\
1 & 0
\end{pmatrix}.
$$
\end{itemize}

By Lemma \ref{cc1}, we know that all the conjugacy classes of $D_2$ are
$$
\mathrm{\mathbf{Cl}}(x_1),\ldots,\mathrm{\mathbf{Cl}}(x_d),
\mathrm{\mathbf{Cl}}(y_1),\ldots,\mathrm{\mathbf{Cl}}(y_{\frac{n-d}{2}}),
\mathrm{\mathbf{Cl}}(z_1b),\ldots,\mathrm{\mathbf{Cl}}(z_db).
$$
Let
$$
\begin{aligned}
x_j&=(a_1^{x_{j1}},a_2^{x_{j2}},\ldots, a_{\lambda+\mu}^{x_{j(\lambda+\mu)}}) \text{~for~} 1\le j\le d,\\
y_j&=(a_1^{y_{j1}},a_2^{y_{j2}},\ldots, a_{\lambda+\mu}^{y_{j(\lambda+\mu)}}) \text{~for~} 1\le j\le \frac{n-d}{2},\\
z_j&=(a_1^{z_{j1}},a_2^{z_{j2}},\ldots, a_{\lambda+\mu}^{z_{j(\lambda+\mu)}}) \text{~for~} 1\le j\le d,
\end{aligned}
$$
Denote by $\chi_{I,1}$, $\chi_{I,2}$, $\varphi_J$ the characters corresponding to the representations
$\phi_{I, 1}$, $\phi_{I, 2}$, $\rho_J$, respectively. In the following, we give the character table of $D_2$.

\begin{lemma}\label{tab11}
Let $D_2$ be a non-abelian
finite group admitting an abelian subgroup of index 2 with the presentation given by (1). Then the characters of $D_2$ are provided in Table 1.
\end{lemma}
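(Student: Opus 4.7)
The plan is to produce the character table entry-by-entry by applying the explicit formulas for the irreducible representations $\phi_{I,1}$, $\phi_{I,2}$, and $\rho_J$ (written out just above the statement) to the conjugacy-class representatives $x_j$, $y_j$, $z_jb$ identified in Lemma~\ref{cc1}. Since the relevant characters are evaluated on representatives only, everything reduces to three small computations per representation.

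First I would handle the one-dimensional characters $\chi_{I,1}$ and $\chi_{I,2}$. On the ``central'' elements $x_j\in A'$ and the ``paired'' elements $y_j\in A\setminus A'$ the character value is read off directly from $\phi_{I,\varepsilon}(a)=\prod_{j}\zeta_{d_j}^{\ell_j t_j}$; on $z_jb$ it is the product $\phi_{I,\varepsilon}(z_j)\phi_{I,\varepsilon}(b)$, with the sign $(-1)^{\varepsilon-1}$ supplied by the $b$-factor. Since $\phi_{I,\varepsilon}$ factors through $A/B$, I would remark that the formulas are well-defined because $y_{j\ell}$ appears only modulo $d_\ell$.

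For the two-dimensional characters $\varphi_J=\operatorname{Tr}(\rho_J)$ I would treat the three types in turn. On $x_j\in A'$, the defining relation $f(x_j)=x_j$ forces $(s_\ell-1)x_{j\ell}\equiv 0\pmod{p_\ell^{e_\ell}}$, so both diagonal entries of $\rho_J(x_j)$ coincide and the trace is $2\prod_\ell \zeta_{p_\ell^{e_\ell}}^{k_\ell x_{j\ell}}$. On $y_j\notin A'$ the two diagonal entries of $\rho_J(y_j)$ are in general distinct and the trace is $\prod_\ell \zeta_{p_\ell^{e_\ell}}^{k_\ell y_{j\ell}}+\prod_\ell \zeta_{p_\ell^{e_\ell}}^{k_\ell y_{j\ell} s_\ell}$. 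On $z_jb$ the product $\rho_J(z_j)\rho_J(b)$ of a diagonal and an anti-diagonal matrix is again anti-diagonal, so $\varphi_J(z_jb)=0$.

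Finally I would assemble these nine values into the table. The only place where genuine care is needed is the two-dimensional row evaluated on $A'$: one must invoke the defining property of $A'$ to collapse the two diagonal entries, and it is here that the presence of the factor $2$ (matching $\dim \rho_J=\varphi_J(1)$) appears. The rest is bookkeeping with roots of unity, plus a brief check that the stated formulas are independent of the choice of representative within each conjugacy class, which follows from Lemma~\ref{g2}.
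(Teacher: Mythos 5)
Your proposal is correct and follows exactly the route the paper takes (implicitly, since Lemma~\ref{tab11} is stated without a written proof there): evaluate the traces of the explicit irreducible representations $\phi_{I,1}$, $\phi_{I,2}$, $\rho_J$ on the class representatives $x_j$, $y_j$, $z_jb$ from Lemma~\ref{cc1}, using that $\rho_J(z_jb)$ is anti-diagonal and that $f$ fixes $A'$ pointwise so the two diagonal entries of $\rho_J(x_j)$ coincide. Your added remarks on well-definedness modulo the choice of representative are a harmless (and correct) bonus.
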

\vspace{-2.0em}
\begin{table}[H]
\centering
\caption{Character table of $D_2$}
\setlength{\tabcolsep}{2.5mm}
\begin{tabular}{c|c|c|c}
\hline \hline$~$ & $A_j=\mathrm{\mathbf{Cl}}(x_j)~(1\le j\le d)$ & $B_j=\mathrm{\mathbf{Cl}}(y_j)~(1\le j\le \frac{n-d}{2})$ & $C_j=\mathrm{\mathbf{Cl}}(z_jb)~(1\le j\le d)$ \\
\hline$\chi_{I,1}$ & $\prod_{i=1}^{\lambda+\mu}\zeta_{d_i}^{\ell_ix_{ji}}$ & $\prod_{i=1}^{\lambda+\mu}\zeta_{d_i}^{\ell_iy_{ji}}$ &
$\prod_{i=1}^{\lambda+\mu}\zeta_{d_i}^{\ell_i(z_{ji}+\frac{v_i}{2})}$ \\
\hline$\chi_{I,2}$ & $\prod_{i=1}^{\lambda+\mu}\zeta_{d_i}^{\ell_ix_{ji}}$ & $\prod_{i=1}^{\lambda+\mu}\zeta_{d_i}^{\ell_iy_{ji}}$ &
$-\prod_{i=1}^{\lambda+\mu}\zeta_{d_i}^{\ell_i(z_{ji}+\frac{v_i}{2})}$ \\
\hline$\varphi_J$ & $\prod_{i=1}^{\lambda+\mu}\zeta_{p_i^{e_i}}^{k_ix_{ji}}+\prod_{i=1}^{\lambda+\mu}\zeta_{p_i^{e_i}}^{k_ix_{ji}s_i}$ & $\prod_{i=1}^{\lambda+\mu}\zeta_{p_i^{e_i}}^{k_iy_{ji}}+\prod_{i=1}^{\lambda+\mu}\zeta_{p_i^{e_i}}^{k_iy_{ji}s_i}$ & $0$  \\
\hline\hline
\end{tabular}
\end{table}

\section{The dimension of $\mathcal{T}(D_2)$}

\begin{theorem}\label{th1}
Let $D_2$ be a non-abelian finite group admitting an abelian subgroup of index 2
with the presentation given by (1).
Then $D_2$ is triply transitive and $\mathrm{dim}_{\mathbb{C}}\mathcal{T} (D_2)=\frac{1}{2}(3nd+n^2+4d^2)$,
where $d=\prod_{i=1}^{\lambda+\mu}d_i$ and $n=|A|$.
\end{theorem}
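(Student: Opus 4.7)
The plan is to sandwich $\mathcal{T}(D_2)$ between $\mathcal{T}_0(D_2)$ and $\widetilde{\mathcal{T}}(D_2)$ via Lemmas~\ref{low} and~\ref{up}, and to show that both extreme terms have dimension $\tfrac{1}{2}(3nd+n^2+4d^2)$. Squeezing then forces equality throughout, yielding simultaneously the dimension of $\mathcal{T}(D_2)$ and the triple transitivity of $D_2$.

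For the upper bound, Lemma~\ref{up} combined with the standard identity $\sum_{g\in G}|\mathrm{C}_G(g)|^2 = |G|\sum_{C}|\mathrm{C}_G(g_C)|$ (sum over conjugacy classes $C$) reduces the computation to summing centralizer orders. Lemma~\ref{cc1} lists $d$ classes $A_j$ of size $1$, $(n-d)/2$ classes $B_j$ of size $2$, and $d$ classes $C_j$ of size $n/d$; since $|D_2|=2n$, the corresponding centralizers have orders $2n$, $n$, and $2d$, giving
\[
\dim\widetilde{\mathcal{T}}(D_2) \;=\; d\cdot 2n \;+\; \tfrac{n-d}{2}\cdot n \;+\; d\cdot 2d \;=\; \tfrac{1}{2}(3nd+n^2+4d^2).
\]

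For the lower bound, by Lemma~\ref{low}, $\dim\mathcal{T}_0(D_2)$ counts triples $(i,j,k)$ with $\mathbf{Cl}_k\subseteq \mathbf{Cl}_i\mathbf{Cl}_j$, so I would enumerate the nine cases indexed by the types of $\mathbf{Cl}_i,\mathbf{Cl}_j\in\{A,B,C\}$ using Lemma~\ref{g2}. The cases $(A,A)$, $(A,B)$, $(B,A)$, $(A,C)$, $(C,A)$, $(B,C)$, $(C,B)$ each contribute exactly one class per pair; the crucial observation for the mixed $B$--$C$ cases is that $By_b=Bf(y_b)$ (since $f(y_b)y_b^{-1}\in B$ by definition of $B$), which collapses the product of a $B$-class with a $C$-class into a single coset. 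For $(B,B)$, the product $\{y_b,f(y_b)\}\{y_{b'},f(y_{b'})\}$ splits into two disjoint $f$-orbits $\{y_by_{b'},f(y_b)f(y_{b'})\}$ and $\{y_bf(y_{b'}),f(y_b)y_{b'}\}$, giving exactly two classes per pair. Summing these easy contributions yields the partial total $\tfrac{1}{2}(n^2+2nd+3d^2)$.

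The main obstacle is the $(C,C)$ case. Using $b^2=y$, $bab^{-1}=f(a)$, the commutativity of $A$, and the fact that $f(B)=B$, a short calculation gives $C_cC_{c'}=Bz_cf(z_{c'})y$, a single coset of $B$ in $A$. This coset is $f$-stable, and its $\alpha_{c,c'}:=|Bz_cf(z_{c'})y\cap A'|$ fixed points yield singleton $A$-classes while the remaining elements pair up into $B$-classes, so the coset contains $\tfrac{1}{2}(n/d+\alpha_{c,c'})$ conjugacy classes of $D_2$. The key identity is $\sum_{c,c'}\alpha_{c,c'}=d^2$: since $f(z_{c'})\equiv z_{c'}\pmod{B}$, the coset $[z_cf(z_{c'})y]\in A/B$ hits each element of $A/B$ exactly $d$ times as $(c,c')$ ranges over $\{1,\dots,d\}^2$, whence
\[
\sum_{c,c'}\alpha_{c,c'} \;=\; d\sum_{[g]\in A/B}|Bg\cap A'| \;=\; d\,|A'| \;=\; d^2.
\]
This contributes $\tfrac{1}{2}d(n+d)$ triples from $(C,C)$, and the grand total $\tfrac{1}{2}(n^2+2nd+3d^2)+\tfrac{1}{2}d(n+d)=\tfrac{1}{2}(3nd+n^2+4d^2)$ matches the upper bound, so $\mathcal{T}_0(D_2)=\mathcal{T}(D_2)=\widetilde{\mathcal{T}}(D_2)$ and both assertions follow.
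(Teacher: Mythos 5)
Your proposal is correct and follows essentially the same route as the paper: both bound $\mathcal{T}(D_2)$ between $\mathcal{T}_0(D_2)$ and $\widetilde{\mathcal{T}}(D_2)$ via Lemmas~\ref{low} and~\ref{up}, compute the upper bound from centralizer orders of the three class types, and obtain the lower bound from the same nine-case enumeration of products of conjugacy classes. The only difference is cosmetic: in the $(C,C)$ case you count classes per coset $Bz_cf(z_{c'})y$ via the fixed-point numbers $\alpha_{c,c'}$ and a double-counting identity, whereas the paper observes directly that for fixed $r$ the $d$ products $C_rC_j$ partition $A$, so their classes total $d+\frac{n-d}{2}$; both give $\frac{1}{2}d(n+d)$.
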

\begin{proof}
Let $G=D_2$. In order to obtain our result, we need the following two claims.
\medskip

\textbf{Claim 1}: $\mathrm{dim}_{\mathbb{C}} \widetilde{\mathcal{T}}(G)=\frac{1}{2}(3nd+n^2+d^2).$

\medskip

Note that $[G:\mathrm{C}_G(g)]=|\mathrm{\mathbf{Cl}}(g)|$ for $g\in G$.
By Lemmas \ref{up} and \ref{cc1}, we have
$$
\begin{aligned}
\mathrm{dim}_{\mathbb{C}} \widetilde{\mathcal{T}}(G)
&=\frac{1}{2n}\left(\sum_{a \in A^{\prime}}\left|\mathrm{C}_G(a)\right|^2+
\sum_{a \in A \backslash A^{\prime}}\left|\mathrm{C}_G(a)\right|^2+\sum_{a \in A}\left|\mathrm{C}_G(ab)\right|^2\right) \\
& =\frac{1}{2n}\left(\sum_{a \in A^{\prime}}(2n)^2+\sum_{a\in A \backslash A^{\prime}} (n)^2
+\sum_{a \in A}\left(2d\right)^2\right) \\
& =\frac{1}{2n}\left(\sum_{a\in A^{\prime}}4n^2+\sum_{a\in A \backslash A^{\prime}} n^2
+\sum_{a \in A} 4d^2\right) \\
& =\frac{1}{2n}\left(4n^2d+n^2(n-d)+4nd^2\right) \\
&=\frac{1}{2}(3nd+n^2+4d^2).
\end{aligned}
$$

Hence Claim 1 holds.

\medskip

\textbf{Claim 2}: $\mathrm{dim}_{\mathbb{C}}\mathcal{T}_0(G)=\frac{1}{2}(3nd+n^2+4d^2)$.

\medskip

By Lemma \ref{low}, we have $\mathrm{dim}_{\mathbb{C}} \mathcal{T}_0(G)=|\{(U,V,W)\mid W\subseteq U\cdot V\}|$,
where $U, V , W$ are conjugacy classes of $G$. In order to get Claim 2,
we consider the following nine cases.

\medskip

\textbf{Case 1}: If $U=A_{r},V=A_j$ for $1\le r,j\le d$, then $U\cdot V=A_r\cdot A_j=\{x_r\}\cdot \{x_j\}=\{x_rx_j\}=\mathrm{\mathbf{Cl}}(x_rx_j)$.
Hence, there are $w_1=d^2$ contributions in $\mathrm{dim}_{\mathbb{C}}\mathcal{T}_0(G)$.

\medskip

\textbf{Case 2}: If $U=A_{r},V=B_j$ for $1\le r\le d$ and $1\le j\le \frac{n-d}{2}$, then $U\cdot V=A_r\cdot A_j=\{x_r\}\cdot \{y_j,f(y_j)\}=\{x_ry_j,x_rf(y_j)\}=\{x_ry_j,f(x_ry_j)\}=\mathrm{\mathbf{Cl}}(x_ry_j)$.
Hence, there are $w_2=\frac{1}{2}(nd-d^2)$ contributions in $\mathrm{dim}_{\mathbb{C}}\mathcal{T}_0(G)$.

\medskip

\textbf{Case 3}: If $U=A_r,V=C_j$ for $1\le r,j\le d$, then $U\cdot V=A_r\cdot C_j=\{x_r\}\cdot\mathrm{\mathbf{Cl}}(z_jb)=\mathrm{\mathbf{Cl}}(x_rz_jb)$.
Hence, there are $w_3=d^2$ contributions in $\mathrm{dim}_{\mathbb{C}}\mathcal{T}_0(G)$.

\medskip

\textbf{Case 4}: If $U=B_{r},V=A_j$ for $1\le r\le \frac{n-d}{2}$ and $1\le j\le d$, then $U\cdot V=B_r\cdot A_j=\{y_r,f(y_r)\}\cdot \{ x_j\}=\{y_rx_j,f(y_r)x_j\}=\{y_rx_j,f(y_rx_j)\}=\mathrm{\mathbf{Cl}}(y_rx_j)$.
Hence, there are $w_4=\frac{1}{2}(nd-d^2)$ contributions in $\mathrm{dim}_{\mathbb{C}}\mathcal{T}_0(G)$.

\medskip

\textbf{Case 5}: If $U=B_{r},V=B_j$ for $1\le i,j\le \frac{n-d}{2}$, then $U\cdot V=B_r\cdot B_j=
\{y_r,f(y_r)\}\cdot \{ y_j,f(y_j)\}=\{y_ry_j,f(y_ry_j))\}\cup\{y_rf(y_j),f((y_rf(y_j))) \}
=\mathrm{\mathbf{Cl}}(y_ry_j)\cup \mathrm{\mathbf{Cl}}(y_rf(y_j))$.
Note that $y_ry_j\not=y_rf(y_j)$ and $y_ry_j\not=f((y_rf(y_j)))$. Then $\mathrm{\mathbf{Cl}}(y_ry_j)\not=\mathrm{\mathbf{Cl}}(y_rf(y_j))$.
Hence, there are $w_5=\frac{1}{2}(n-d)^2$ contributions in $\mathrm{dim}_{\mathbb{C}}\mathcal{T}_0(G)$.

\medskip

\textbf{Case 6}: If $U=B_r,V=C_j$ for $1\le r \le \frac{n-d}{2}$ and $1\le j\le d$, then $U\cdot V=B_r\cdot C_j
=\{y_r,f(y_r)\}\cdot \mathrm{\mathbf{Cl}}(z_jb)=\mathrm{\mathbf{Cl}}(y_rz_jb)$.
Hence, there are $w_6=\frac{1}{2}(nd-d^2)$ contributions in $\mathrm{dim}_{\mathbb{C}}\mathcal{T}_0(G)$.

\medskip

\textbf{Case 7}: If $U=C_r,V=A_j$ for $1\le r,j\le d$, then $U\cdot V=C_r\cdot A_j=\mathrm{\mathbf{Cl}}(z_rb)\cdot \{x_j\}=
\mathrm{\mathbf{Cl}}(z_rx_jb)$. Hence, there are $w_7=d^2$ contributions in $\mathrm{dim}_{\mathbb{C}}\mathcal{T}_0(G)$.

\medskip

\textbf{Case 8}: If $U=C_r,V=B_j$ for $1\le r\le d$ and $1\le j\le \frac{n-d}{2}$, then
$U\cdot V=C_r\cdot B_j=\mathrm{\mathbf{Cl}}(z_rb)\cdot \{y_j,f(y_j)\}=\mathrm{\mathbf{Cl}}(z_ry_jb)$.
Hence, there are $w_8=\frac{1}{2}(nd-d^2)$ contributions in $\mathrm{dim}_{\mathbb{C}}\mathcal{T}_0(G)$.

\medskip

\textbf{Case 9}: If $U=C_r,V=C_j$ for $1\le r,j\le d$, then $U\cdot V=C_r\cdot C_j
=\mathrm{\mathbf{Cl}}(z_rb)\cdot \mathrm{\mathbf{Cl}}(z_jb)$. Note that for each fixed $r$, we have that
$\mathrm{\mathbf{Cl}}(z_rb)\cdot \mathrm{\mathbf{Cl}}(z_1b), \mathrm{\mathbf{Cl}}(z_rb)\cdot \mathrm{\mathbf{Cl}}(z_2b),
\ldots, \mathrm{\mathbf{Cl}}(z_rb)\cdot \mathrm{\mathbf{Cl}}(z_{2^{\lambda}}b)$ is a partition of $A$.
Hence, there are $w_9=d\cdot\left(d+\frac{1}{2}(n-d)\right)=d^2+\frac{1}{2}\left(d(n-d)\right)$ contributions in $\mathrm{dim}_{\mathbb{C}}\mathcal{T}_0(G)$.

\medskip

From Cases 1--9, we have

$$
\begin{aligned}
\mathrm{dim}_{\mathbb{C}}\mathcal{T}_0(G)&=\sum_{j=1}^9w_j=4\cdot d^2+\frac{1}{2}(n-d)^2
+5\cdot\frac{1}{2}\left(d(n-d)\right)\\
&=\frac{1}{2}(3nd+n^2+4d^2).
\end{aligned}
$$
Hence Claim 2 holds.

\medskip

By Lemmas $\ref{low}$ and $\ref{up}$, we have $\mathrm{dim}_{\mathbb{C}}\mathcal{T}_0(G)\le \mathrm{dim}_{\mathbb{C}}\mathcal{T}(G)\le \mathrm{dim}_{\mathbb{C}}\widetilde{\mathcal{T}}(G)$. By Claims 1 and 2, we have $\mathrm{dim}_{\mathbb{C}}\mathcal{T}_0(G)=\mathrm{dim}_{\mathbb{C}}\widetilde{\mathcal{T}}(G)=\frac{1}{2}(3nd+n^2+4d^2)$.
Therefore, $D_2$ is triply transitive and  $\mathrm{dim}_{\mathbb{C}}\mathcal{T}(D_2)=\frac{1}{2}(3nd+n^2+4d^2)$.
\end{proof}

We now consider the case where $f(a)=a^{-1}$ for $a \in A$, in which case $B=A^2$ and $y^2=1$ (and $A$ has exponent at least 3 ). If $y=1$,
then $D_2$ is the generalized dihedral group $\mathrm{Dih}(A)$, which is given by the following presentation:
$$
\mathrm{Dih}(A)=\left\langle A, b \mid b^2=1, bab^{-1}=a^{-1}(a \in A)\right\rangle .
$$
In this case, we have  $s_i=-1$ for $i=1,2,\ldots,\lambda+\mu$. Therefore, $d=2^{\lambda}$.

If $y \neq 1$ (and so $|A|$ is even), then $D_2$ is the generalized dicyclic group $\mathrm{Dic}(A, b)$, which is given by the following presentation:
$$
\mathrm{Dic}(A, b)=\left\langle A, b \mid b^2=y, bab^{-1}=a^{-1}(a \in A)\right\rangle,
$$
where $|A|=2n$ and $y$ is the unique element of $A$ of order $2$.
In this case, we have $\lambda=1$ and $s_i=-1$ for $i=1,2,\ldots,\lambda+\mu$. Therefore, $d=2$.

\begin{corollary}\cite[Theorem 3.1]{YGZF}
Let $\mathrm{Dih}(A)$ be a generalized dihedral group.
Then $\mathrm{Dih}(A)$ is triply transitive and
$\mathrm{dim}_{\mathbb{C}}\mathcal{T}(\mathrm{Dih}(A))=\frac{n^2}{2}+3n2^{\lambda-1}+2^{2\lambda+1}$.
\end{corollary}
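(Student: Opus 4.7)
The plan is to deduce the corollary directly from Theorem \ref{th1} by specializing to the generalized dihedral case. For $\mathrm{Dih}(A) = \langle A, b \mid b^2 = 1,\, bab^{-1} = a^{-1}\rangle$, the automorphism $f$ is the inversion map, so the exponent vector of $f$ on the cyclic factors of $A$ in the decomposition (2) satisfies $s_i = -1$ for every $i = 1, \ldots, \lambda + \mu$, and the element $y$ appearing in presentation (1) is trivial. The only quantity from Theorem \ref{th1} that is not already in dihedral form is $d$, so the first step is to compute it.

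Using $d_i = \gcd(s_i - 1,\, p_i^{e_i}) = \gcd(-2,\, p_i^{e_i}) = \gcd(2,\, p_i^{e_i})$, one gets $d_i = 2$ whenever $p_i = 2$ (i.e.\ for $i = 1, \ldots, \lambda$) and $d_i = 1$ whenever $p_i$ is odd. Consequently
\[
d = \prod_{i=1}^{\lambda + \mu} d_i = 2^{\lambda}.
\]
This identification is exactly the computation already recorded in the paragraph preceding the corollary, so no new argument is required.

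With $d = 2^{\lambda}$ in hand, both assertions of the corollary follow immediately from Theorem \ref{th1}: triple transitivity is inherited verbatim, and substituting into the dimension formula yields
\[
\mathrm{dim}_{\mathbb{C}}\mathcal{T}(\mathrm{Dih}(A)) = \tfrac{1}{2}\bigl(3n \cdot 2^{\lambda} + n^2 + 4 \cdot 2^{2\lambda}\bigr) = \frac{n^2}{2} + 3n \cdot 2^{\lambda - 1} + 2^{2\lambda + 1},
\]
which is the stated expression. There is no substantive obstacle in the argument; the only point that merits a sentence of justification is that $\mathrm{Dih}(A)$ genuinely fits the hypothesis of Theorem \ref{th1} (non-abelian whenever $A$ has exponent at least $3$, with $A$ an index-$2$ abelian subgroup and the presentation (1) recovered by setting $y = 1$), which is immediate from the definition.
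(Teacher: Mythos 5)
Your proposal is correct and follows exactly the route the paper takes: the corollary is obtained by specializing Theorem \ref{th1} to the case $f(a)=a^{-1}$, $y=1$, where $s_i=-1$ gives $d_i=\gcd(2,p_i^{e_i})$ and hence $d=2^{\lambda}$, after which the dimension formula $\frac{1}{2}(3nd+n^2+4d^2)$ reduces to the stated expression. Your added remark verifying that $\mathrm{Dih}(A)$ satisfies the hypotheses of Theorem \ref{th1} is a reasonable, if minor, supplement to what the paper records in the paragraph preceding the corollary.
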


\begin{corollary}
Let $\mathrm{Dic}(A,b)$ be a generalized dicyclic group.
Then $\mathrm{Dic}(A,b)$ is triply transitive and $\mathrm{dim}_{\mathbb{C}}\mathcal{T}(\mathrm{Dic}(A,b))=6n+2n^2+8$.
\end{corollary}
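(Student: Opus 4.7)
The plan is to deduce this corollary directly from Theorem~\ref{th1} by computing the invariant $d=\prod_{i=1}^{\lambda+\mu}d_i$ in the special case of the generalized dicyclic group. First I would verify that $\mathrm{Dic}(A,b)$ genuinely fits the framework of $D_2$: its presentation is exactly of the form~(1) with $f(a)=a^{-1}$ and $y$ the unique element of order~$2$ in $A$, and by construction $f$ is an automorphism of order~$2$ fixing $y$. Triple transitivity is then immediate from Theorem~\ref{th1}, so the only real work is identifying the parameters.

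Next I would pin down $\lambda$ and the $d_i$'s. Since $A$ contains a unique element of order~$2$, its Sylow $2$-subgroup must be cyclic, because any non-cyclic abelian $2$-group contains more than one involution. In the decomposition~(2), this forces the $2$-part to be a single cyclic factor, i.e.\ $\lambda=1$. With $f(a)=a^{-1}$ we have $s_i=-1$ for every $i$, so $d_i=\gcd(-2,p_i^{e_i})$. For $i=1$ the prime $p_1=2$ gives $d_1=2$, while for $i\geq 2$ the prime $p_i$ is odd and $d_i=1$. Consequently $d=\prod_{i=1}^{1+\mu}d_i=2$, as already stated in the paragraph preceding the corollary.

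Finally I would substitute into the formula from Theorem~\ref{th1}. The one point to be careful about is the notational clash: in Theorem~\ref{th1} the symbol $n$ denotes $|A|$, whereas in the corollary $|A|=2n$. Plugging $|A|=2n$ and $d=2$ into $\tfrac{1}{2}(3|A|d+|A|^2+4d^2)$ yields
\[
\tfrac{1}{2}\bigl(3\cdot 2n\cdot 2+(2n)^2+4\cdot 4\bigr)=\tfrac{1}{2}(12n+4n^2+16)=2n^2+6n+8,
\]
which is the stated dimension. I do not expect any genuine obstacle here; the deduction is essentially a specialisation of Theorem~\ref{th1}, and the only subtlety is the $|A|=2n$ substitution and the short argument that cyclicity of the Sylow $2$-subgroup forces $\lambda=1$.
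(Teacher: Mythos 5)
Your proposal is correct and follows exactly the paper's route: the paper likewise treats $\mathrm{Dic}(A,b)$ as the special case $f(a)=a^{-1}$, $y\neq 1$ of the presentation (1), notes $\lambda=1$, $s_i=-1$, hence $d=2$, and specializes Theorem~3.1 with $|A|=2n$. Your added justification that uniqueness of the involution forces the Sylow $2$-subgroup to be cyclic (hence $\lambda=1$) is a detail the paper leaves implicit, and your arithmetic $\tfrac{1}{2}(12n+4n^2+16)=2n^2+6n+8$ agrees with the stated dimension.
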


Next, we consider the case  where $A$ is a cyclic group. Then $D_2$ is the non-abelian finite group admitting a cyclic subgroup of index 2, which is given by the following presentation:
$$G_2=\l a, b\mid a^n=1, b^2=a^t,bab^{-1}=a^s\r,$$ where $s^2\equiv1~(\bmod~n)$, $s\not\equiv1~(\bmod~n)$, $\gcd(s,n)=1$, and $t(s-1)\equiv0~(\bmod~n)$. In this case, we have $\lambda=1$ and $d=\prod_{i=1}^{\lambda+\mu}d_i=\gcd(n,s-1)$.

\begin{corollary}\cite[Theorem 4.1]{YGZF}
Let $G_2$ be a non-abelian finite group admitting a cyclic subgroup of index 2.
Then $G_2$ is triply transitive and
$\mathrm{dim}_{\mathbb{C}}\mathcal{T}(G_2)=\frac{1}{2}(3nd+n^2+4d^2)$.
\end{corollary}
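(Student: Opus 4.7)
The plan is to sandwich $\mathcal{T}(D_2)$ between $\mathcal{T}_0(G)$ and $\widetilde{\mathcal{T}}(G)$ via Lemmas \ref{low} and \ref{up}, and then show both extremes have dimension $\tfrac{1}{2}(3nd+n^2+4d^2)$. Once the upper and lower bounds coincide, triple transitivity and the formula for $\dim_{\mathbb{C}}\mathcal{T}(D_2)$ follow immediately.

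For the upper bound, I would apply Lemma \ref{up} and exploit Lemma \ref{cc1} to read off the centralizer orders from the conjugacy class sizes via orbit-stabilizer. Writing $|G|=2n$, the three types of classes have sizes $1$, $2$, and $n/d$, so the centralizers have orders $2n$, $n$, and $2d$ respectively. Since $A'$ contributes $d$ elements, $A\setminus A'$ contributes $n-d$ elements, and $Ab$ contributes $n$ elements, I would compute
\[
\frac{1}{2n}\bigl(d(2n)^2+(n-d)n^2+n(2d)^2\bigr)=\frac{1}{2}(3nd+n^2+4d^2).
\]
This step is essentially bookkeeping once Lemma \ref{cc1} is in hand.

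For the lower bound, by Lemma \ref{low} I need to count triples $(U,V,W)$ of conjugacy classes with $W\subseteq U\cdot V$, equivalently the number of distinct classes inside each product $U\cdot V$ summed over ordered pairs $(U,V)$. I would split into nine cases according to whether $U,V$ is of type $A_j$, $B_j$, or $C_j$. The four cases in which one factor is a singleton $A_j=\{x_j\}$ with $x_j\in A'$ are immediate, because $x_j$ commutes with everything and $\{x_j\}\cdot V$ is again a single conjugacy class (using that $f(x_j y)=x_j f(y)$). The cases $B\cdot C$ and $C\cdot B$ are also single classes since $B_r\cdot z_jb$ lies inside $\mathrm{Cl}(y_rz_jb)=Bz_jy_rb$ by Lemma \ref{g2}(iii). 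The case $B_r\cdot B_j$ generically splits into the two distinct classes $\mathrm{Cl}(y_ry_j)$ and $\mathrm{Cl}(y_rf(y_j))$. The delicate case is $C_r\cdot C_j$: here the product lies in $A$, and for fixed $r$ the sets $\mathrm{Cl}(z_rb)\cdot\mathrm{Cl}(z_jb)$ for $1\le j\le d$ should partition $A$, so the total number of $A$-classes and $B$-classes appearing is $d+\tfrac{n-d}{2}$; multiplying by $d$ choices of $r$ gives $d^2+\tfrac{1}{2}d(n-d)$. Summing the nine contributions and simplifying yields again $\tfrac{1}{2}(3nd+n^2+4d^2)$.

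The main obstacle I expect is the $C\cdot C$ case: justifying cleanly that the $d$ products $\mathrm{Cl}(z_rb)\cdot\mathrm{Cl}(z_jb)$ partition $A$ as $j$ varies. I would handle it by noting $\mathrm{Cl}(z_jb)=Bz_jb$ (Lemma \ref{g2}(iii)), so $\mathrm{Cl}(z_rb)\cdot\mathrm{Cl}(z_jb)=Bz_rf(z_j)B\subseteq A$, and that the cosets of $B$ in $A$ are indexed exactly by the representatives $z_j$; then count the $A'$-classes and $B$-type classes sitting inside each coset. With these nine counts assembled, Lemmas \ref{low} and \ref{up} force $\mathcal{T}_0(G)=\mathcal{T}(G)=\widetilde{\mathcal{T}}(G)$, which is precisely the triple transitivity of $D_2$ and gives the stated dimension.
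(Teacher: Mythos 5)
Your proposal is correct and follows essentially the same route as the paper: the paper obtains this corollary by specializing Theorem \ref{th1} (take $A$ cyclic, $\lambda=1$, $d=\gcd(n,s-1)$), and Theorem \ref{th1} is proved exactly by your sandwich argument --- Lemma \ref{up} with the centralizer orders $2n$, $n$, $2d$ for the upper bound, and the same nine-case count of triples $(U,V,W)$ via Lemma \ref{low} for the lower bound, including the partition argument for $\mathrm{\mathbf{Cl}}(z_rb)\cdot\mathrm{\mathbf{Cl}}(z_jb)$. Both computations match the paper's, so nothing further is needed.
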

\section{Wedderburn decomposition for $\mathcal{T}(D_2)$}

In this section, we find the Wedderburn decomposition for the Terwilliger algebra of the group $D_2$.
Clearly, $D_2$ has $2d$ inequivalent one-dimensional irreducible representations and $\frac{n-d}{2}$ inequivalent two-dimensional irreducible representations. Denote by
$$
\{\chi_{I_{1,1}},\chi_{I_{1,2}},\ldots,\chi_{I_{2d,1}},\chi_{I_{2d,2}}\}
, \qquad \{ \varphi_{J_1},\varphi_{J_2},\ldots,\varphi_{J_{\frac{n-d}{2}}}\}$$
the character set corresponding to all the
inequivalent one-dimensional and two-dimensional irreducible representations of $D_2$, respectively.
Let
$$
I_{r,1}=I_{r,2}=(\ell_{r1},\ell_{r2},\ldots,\ell_{r(\lambda+\mu)}),\qquad J_u=(k_{u1},k_{u2},\ldots,k_{u(\lambda+\mu)})
$$
for $1\le r \le d$ and $1\le u \le \frac{n-d}{2}$. In particular, $J_u\not=(0,0,\ldots,0)$.
In the following, we will determine the Wedderburn decomposition of $D_2$.
\begin{theorem}\label{th2}
Let $D_2$ be a non-abelian finite group admitting an abelian subgroup of index 2
with the presentation given by (1). If $M_m$ is the $m$-dimensional complex matrix algebra, then
$$
\begin{aligned}
\mathcal{T}(D_2)\cong &
\left(\bigoplus_{r=1}^{d}M_{d_{I_{r,1}}}\right)\bigoplus\left(\bigoplus_{r=1}^dM_{d_{I_{r,2}}}\right)\bigoplus \left( \bigoplus_{u=1}^{\frac{n-d}{2}}M_{d_{J_u}}\right),
\end{aligned}
$$
where $d=\prod_{i=1}^{\lambda+\mu}d_i$ and $n=|A|$ and
\begin{eqnarray}
d_{I_{r,1}}=\begin{cases}
\frac{n}{2}+\frac{3d}{2}, & \text{ if } d_i\mid \ell_{ri} \text{ for } 1\le i\le \lambda+\mu,\\
\frac{d}{2}, &\text{ if } d_i\nmid \ell_{ri} \text{ for some } 1\le i\le \lambda+\mu\\
& \text{ and } d_i\mid n_i\ell_{ri} \text{ for } 1\le i\le \lambda+\mu,\\
0, &~otherwise,\\
\end{cases}
\nonumber
\end{eqnarray}

and

\begin{eqnarray}
d_{I_{r,2}}=\begin{cases}
\frac{n}{2}-\frac{d}{2}, &\text{ if } d_i\mid \ell_{ri} \text{ for } 1\le i\le \lambda+\mu,\\
\frac{d}{2}, &\text{ if } d_i\nmid \ell_{ri} \text{ for some } 1\le i\le \lambda+\mu\\
&\text{ and } d_i\mid n_i\ell_{ri} \text{ for } 1\le i\le \lambda+\mu,\\
0,&~otherwise,\\
\end{cases}
\nonumber
\end{eqnarray}

and

\begin{eqnarray}
d_{{J_u}}=\begin{cases}
d,   &\text{ if } d_i\mid k_{ui} \text{ for } 1\le i\le \lambda+\mu,\\
0, &~otherwise.\\
\end{cases}
\nonumber
\end{eqnarray}
\end{theorem}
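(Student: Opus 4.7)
The plan is to invoke Theorem \ref{th1} to identify $\mathcal{T}(D_2)$ with $\widetilde{\mathcal{T}}(D_2)$ via triple transitivity, and then apply Lemma \ref{di}, which reduces the theorem to computing $d_j = \sum_{k=0}^{\ell}\overline{\chi_j(g_k)}$ for each irreducible character $\chi_j$ of $D_2$. All the character values are available in Table~1, and the conjugacy class representatives are grouped by Lemma \ref{cc1} into the three families $\{x_j\}_{j=1}^{d}$, $\{y_j\}_{j=1}^{(n-d)/2}$, $\{z_j b\}_{j=1}^{d}$. It is therefore natural to split each $d_j$ into the corresponding three sub-sums and evaluate them separately by root-of-unity identities.

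For the two-dimensional characters $\varphi_{J_u}$, the computation is short because Table~1 makes the $\{z_j b\}$-sub-sum vanish identically. The remaining two sub-sums can be combined by noting that $f$ fixes $A^\prime$ pointwise while pairing the elements of $A \setminus A^\prime$, so that
\[
d_{J_u} \;=\; \sum_{a\in A}\overline{\chi(a)} \;+\; \sum_{a\in A^\prime}\overline{\chi(a)},
\]
where $\chi$ denotes the character of $A$ indexed by $J_u$. The first summand vanishes because $J_u \neq (0,\ldots,0)$, while the second factorises as $\prod_i \sum_{t=1}^{d_i}\zeta_{p_i^{e_i}}^{-k_{ui} n_i t}$, which equals $d$ exactly when $d_i \mid k_{ui}$ for every $i$ and $0$ otherwise. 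This reproduces the stated formula for $d_{J_u}$.

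For the one-dimensional characters $\chi_{I_{r,s}}$, let $\phi$ denote the restriction to $A$, so that $\phi(a) = \prod_i \zeta_{d_i}^{\ell_{ri} t_i}$. The key observation, which follows from $d_i = \gcd(s_i-1, p_i^{e_i})$, is that $\phi$ is constant on the conjugacy classes contained in $A$, so each $\{y_j, f(y_j)\}$ is counted with multiplicity two in a sum over $A\setminus A^\prime$. Using the identity $\sum_{t=1}^{m}\zeta_m^{kt} = m\cdot \mathbf{1}[m\mid k]$, I would then compute: the $\{x_j\}$-sub-sum equals $d$ iff $d_i \mid n_i\ell_{ri}$ for all $i$; the $\{z_j b\}$-sub-sum equals $\overline{\phi(b)}\cdot d$ iff $d_i \mid \ell_{ri}$ for all $i$; and the $\{y_j\}$-sub-sum equals $\tfrac{1}{2}\bigl(\sum_{a\in A}\overline{\phi(a)} - \sum_{a\in A^\prime}\overline{\phi(a)}\bigr)$, evaluated by the same principle. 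Assembling the three sub-sums under the three mutually exclusive hypotheses of the theorem yields the piecewise formulas for $d_{I_{r,1}}$ and $d_{I_{r,2}}$.

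The delicate point will be pinning down the sign carried by $\overline{\phi(b)} = \prod_i \zeta_{d_i}^{-\ell_{ri} v_i/2}$ in the first case. Under the convention $0 \le \ell_{ri} \le d_i - 1$ from Lemma \ref{dex2}, the hypothesis $d_i \mid \ell_{ri}$ for every $i$ forces $\ell_{ri} = 0$ for every $i$, so that $\phi|_A$ is trivial and $\phi(b)$ collapses to $+1$ for $\chi_{I_{r,1}}$ and $-1$ for $\chi_{I_{r,2}}$ by the very definition of $\phi_{I,s}$. Once this sign is settled, the three arithmetic combinations
\[
d + \tfrac{1}{2}(n-d) + d = \tfrac{1}{2}(n+3d),\qquad d + \tfrac{1}{2}(n-d) - d = \tfrac{1}{2}(n-d),\qquad d - \tfrac{d}{2} + 0 = \tfrac{d}{2}
\]
recover the three non-zero entries of $d_{I_{r,1}}$ and $d_{I_{r,2}}$ exactly, and the remaining cases are identically zero.
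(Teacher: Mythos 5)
Your proposal is correct and follows essentially the same route as the paper: reduce to $\widetilde{\mathcal{T}}(D_2)$ via Theorem \ref{th1}, apply Lemma \ref{di}, split each multiplicity $d_j$ over the three families of conjugacy class representatives from Lemma \ref{cc1}, and evaluate the resulting root-of-unity sums. The only (harmless) deviation is in the two-dimensional case, where you use the symmetry $\sum_{a}\chi(f(a))=\sum_a\chi(a)$ to collapse the sub-sums instead of the paper's explicit divisibility argument with the $s_i$; your explicit treatment of the sign of $\overline{\phi(b)}$ in the case $\ell_{ri}=0$ matches what the paper does implicitly.
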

\begin{proof}
By Lemma \ref{tab11}, we consider the decomposition
$$ \psi=\sum_{r=1}^{d}d_{I_{r,1}}\chi_{I_{r,1}}
+\sum_{r=1}^{d}d_{I_{r,2}}\chi_{I_{r,2}}
+\sum_{u=1}^{\frac{n-d}{2}}d_{J_u}\varphi_{J_u},$$
where $\psi$ is the permutation representation of $D_2$ acting on itself by conjugation.
Let $a=(a_1^{t_1},\ldots,a_{\lambda+\mu}^{t_{\lambda+\mu}})\in A$.
Next, we compute the coefficients $d_{I_r,1},d_{I_r,2},d_{J_u}$.

\medskip

By Lemmas \ref{cc1} and \ref{tab11}, we have
$$
\begin{aligned}
d_{I_{r,1}}
&=\sum_{j=1}^d \overline{\chi_{I_{r, 1}}\left(x_j\right)}+
\sum_{j=1}^{\frac{n-d}{2}} \overline{\chi_{I_{r, 1}}\left(y_j\right)}+
\sum_{j=1}^{d} \overline{\chi_{I_{r, 1}}\left(z_jb\right)} \\
&=\sum_{j=1}^d\prod_{i=1}^{\lambda+\mu}\zeta_{d_i}^{-\ell_{ri}x_{ji}}+
\sum_{j=1}^{\frac{n-d}{2}}\prod_{i=1}^{\lambda+\mu}\zeta_{d_i}^{-\ell_{ri}y_{ji}}+
\sum_{j=1}^d\prod_{i=1}^{\lambda+\mu}\zeta_{d_i}^{-\ell_{ri}(z_{ji}+\frac{v_i}{2})}\\
&=\frac{1}{2}\sum_{j=1}^d\prod_{i=1}^{\lambda+\mu}\zeta_{d_i}^{-\ell_{ri}x_{ji}}+
\frac{1}{2}\sum_{a\in A}\prod_{i=1}^{\lambda+\mu}\zeta_{d_i}^{-\ell_{ri}t_i}+
\sum_{j=1}^d\prod_{i=1}^{\lambda+\mu}\zeta_{d_i}^{-\ell_{ri}(z_{ji}+\frac{v_i}{2})}\\
&=\frac{1}{2}\prod_{i=1}^{\lambda+\mu}\sum_{j=1}^{d_i}\left(\zeta_{d_i}^{-n_i\ell_{ri}}\right)^j+
\frac{1}{2}\prod_{i=1}^{\lambda+\mu}\sum_{j=1}^{p_i^{e_i}}\left(\zeta_{p_i^{e_i}}^{-n_i\ell_{ri}}\right)^j+
\prod_{i=1}^{\lambda+\mu}\left(\zeta_{d_i}^{-\frac{\ell_{ri}v_i}{2}}
\sum_{j=1}^{d_i}\left(\zeta_{d_i}^{-\ell_{ri}}\right)^j\right).
\end{aligned}
$$
\begin{itemize}
\item[(i)] If $ d_i\mid \ell_{ri}$ for $1\le i \le \lambda+\mu$, then $d_{I_{r,1}}=\frac{1}{2}\prod_{i=1}^{\lambda+\mu}d_i+\frac{1}{2}\prod_{i=1}^{\lambda+\mu}p_i^{e_i}
    +\prod_{i=1}^{\lambda+\mu}d_i=\frac{3d}{2}+n$;
\item[(ii)] If $ d_i\nmid \ell_{ri}$ for some $1\le i \le \lambda+\mu$ and $ d_i\mid n_i\ell_i$ for $1\le i \le \lambda+\mu$, then $d_{I_{r,1}}=\frac{1}{2}\prod_{i=1}^{\lambda+\mu}d_i+0+0=\frac{d}{2}$;
\item[(iii)] If $ d_i\nmid \ell_{ri}$ for some $1\le i \le \lambda+\mu$ and $ d_i\nmid n_i\ell_i$ for some $1\le i \le \lambda+\mu$, then $d_{I_{r,1}}=0$.
\end{itemize}

Secondly, we compute $d_{I_{r,2}}$.

$$
\begin{aligned}
d_{I_{r,2}}
&=\sum_{j=1}^d \overline{\chi_{I_{r, 2}}\left(x_j\right)}+
\sum_{j=1}^{\frac{n-d}{2}} \overline{\chi_{I_{r, 2}}\left(y_j\right)}+
\sum_{j=1}^{d} \overline{\chi_{I_{r, 2}}\left(z_jb\right)} \\
&=\sum_{j=1}^d\prod_{i=1}^{\lambda+\mu}\zeta_{d_i}^{-\ell_{ri}x_{ji}}+
\sum_{j=1}^{\frac{n-d}{2}}\prod_{i=1}^{\lambda+\mu}\zeta_{d_i}^{-\ell_{ri}y_{ji}}-
\sum_{j=1}^d\prod_{i=1}^{\lambda+\mu}\zeta_{d_i}^{-\ell_{ri}(z_{ji}+\frac{v_i}{2})}\\
&=\frac{1}{2}\sum_{j=1}^d\prod_{i=1}^{\lambda+\mu}\zeta_{d_i}^{-\ell_{ri}x_{ji}}+
\frac{1}{2}\sum_{a\in A}\prod_{i=1}^{\lambda+\mu}\zeta_{d_i}^{-\ell_{ri}t_i}-
\sum_{j=1}^d\prod_{i=1}^{\lambda+\mu}\zeta_{d_i}^{-\ell_{ri}(z_{ji}+\frac{v_i}{2})}\\
&=\frac{1}{2}\prod_{i=1}^{\lambda+\mu}\sum_{j=1}^{d_i}\left(\zeta_{d_i}^{-n_i\ell_{ri}}\right)^j+
\frac{1}{2}\prod_{i=1}^{\lambda+\mu}\sum_{j=1}^{p_i^{e_i}}\left(\zeta_{p_i^{e_i}}^{-n_i\ell_{ri}}\right)^j-
\prod_{i=1}^{\lambda+\mu}\left(\zeta_{d_i}^{-\frac{\ell_{ri}v_i}{2}}
\sum_{j=1}^{d_i}\left(\zeta_{d_i}^{-\ell_{ri}}\right)^j\right).
\end{aligned}
$$

\begin{itemize}
\item[(i)] If $ d_i\mid \ell_{ri}$ for $1\le i \le \lambda+\mu$, then $d_{I_{r,2}}=\frac{1}{2}\prod_{i=1}^{\lambda+\mu}d_i+\frac{1}{2}\prod_{i=1}^{\lambda+\mu}p_i^{e_i}
    -\prod_{i=1}^{\lambda+\mu}d_i=\frac{n}{2}-\frac{d}{2}$;
\item[(ii)] If $ d_i\nmid \ell_{ri}$ for some $1\le i \le \lambda+\mu$ and $ d_i\mid n_i\ell_i$ for $1\le i \le \lambda+\mu$, then $d_{I_{r,2}}=\frac{1}{2}\prod_{i=1}^{\lambda+\mu}d_i+0-0=\frac{d}{2}$;
\item[(iii)] If $ d_i\nmid \ell_{ri}$ for some $1\le i \le \lambda+\mu$ and $ d_i\nmid n_i\ell_i$ for some $1\le i \le \lambda+\mu$, then $d_{I_{r,2}}=0$.
\end{itemize}

Finally, we compute $d_{J_u}$ for $1\le u\le \frac{n-d}{2}$.
$$
\begin{aligned}
d_{J_u}
&=\sum_{j=1}^d \overline{\varphi_{J_u}\left(x_j\right)}+
\sum_{j=1}^{\frac{n-d}{2}} \overline{\varphi_{J_u}\left(y_j\right)}+
\sum_{j=1}^{d} \overline{\varphi_{J_u}\left(z_jb\right)} \\
&=\sum_{j=1}^d\left(\prod_{i=1}^{\lambda+\mu}\zeta_{p_i^{e_i}}^{k_{ui}x_{ji}}+
\prod_{i=1}^{\lambda+\mu}\zeta_{p_i^{e_i}}^{k_{ui}x_{ji}s_i}\right)+
\sum_{j=1}^{\frac{n-d}{2}}\left(\prod_{i=1}^{\lambda+\mu}\zeta_{p_i^{e_i}}^{k_{ui}y_{ji}}+
\prod_{i=1}^{\lambda+\mu}\zeta_{p_i^{e_i}}^{k_{ui}y_{ji}s_i}\right)\\
&=\frac{1}{2}\sum_{j=1}^d\left(\prod_{i=1}^{\lambda+\mu}\zeta_{p_i^{e_i}}^{k_{ui}x_{ji}}+
\prod_{i=1}^{\lambda+\mu}\zeta_{p_i^{e_i}}^{k_{ui}x_{ji}s_i}\right)+
\frac{1}{2}\sum_{a\in A}\left(\prod_{i=1}^{\lambda+\mu}\zeta_{p_i^{e_i}}^{k_{ui}t_i}+\prod_{i=1}^{\lambda+\mu}
\zeta_{p_i^{e_i}}^{k_{ui}t_is_i}\right)\\
&=\frac{1}{2}\left(\prod_{i=1}^{\lambda+\mu}\sum_{j=1}^{d_i}\left(\zeta_{d_i}^{-k_{ui}}\right)^j+
\prod_{i=1}^{\lambda+\mu}\sum_{j=1}^{d_i}\left(\zeta_{d_i}^{-k_{ui}s_i}\right)^j\right)+
\frac{1}{2}\left(\prod_{i=1}^{\lambda+\mu}\sum_{j=1}^{p_i^{e_i}}\left(\zeta_{p_i^{e_i}}^{-k_{ui}}\right)^j+
\prod_{i=1}^{\lambda+\mu}\sum_{j=1}^{p_i^{e_i}}\left(\zeta_{p_i^{e_i}}^{-k_{ui}s_i}\right)^j\right).
\end{aligned}
$$

Since $f\in \Aut(A)$ be of order $2$, we have $p_i^{e_i}\mid s_i^2-1$ for $1\le i\le \lambda+\mu$.
Note that $(k_{u1},k_{u2},\ldots,k_{u(\lambda+\mu)})\not=(0,0,\ldots,0)$ and $0\le k_{ui}\le p_i^{e_i}-1$.
If $p_i$ is even, then $s_i$ is odd and hence $p_i^{e_i}\mid k_{ui}s_i$ if and only if $k_{ui}=0$.
If $p_i$ is odd, then $p_i^{e_i}\mid s_i+1$ or $p_i^{e_i}\mid s_i-1$. Without loss of generality, we assume that 
$p_i^{e_i}\mid s_i+1$. Then $p_i^{e_i}\mid k_{ui}s_i$ $\Leftrightarrow$ $p_i^{e_i}\mid k_{ui}(s_i+1)-k_{ui}$
$\Leftrightarrow$ $p_i^{e_i}\mid k_{ui}$ $\Leftrightarrow$ $k_{ui}=0$. Hence $p_i^{e_i}\nmid k_{ui}s_i$ for some
$1\le i \le \lambda+\mu$. Consequently, we obtain
$$
\frac{1}{2}\left(\prod_{i=1}^{\lambda+\mu}\sum_{j=1}^{p_i^{e_i}}\left(\zeta_{p_i^{e_i}}^{-k_{ui}}\right)^j+
\prod_{i=1}^{\lambda+\mu}\sum_{j=1}^{p_i^{e_i}}\left(\zeta_{p_i^{e_i}}^{-k_{ui}s_i}\right)^j\right)=0.
$$
Therefore, we have
$$
d_{J_u}=\frac{1}{2}\left(\prod_{i=1}^{\lambda+\mu}\sum_{j=1}^{d_i}\left(\zeta_{d_i}^{-k_{ui}}\right)^j+
\prod_{i=1}^{\lambda+\mu}\sum_{j=1}^{d_i}\left(\zeta_{d_i}^{-k_{ui}s_i}\right)^j\right).
$$
\begin{itemize}
\item[(i)]If $d_i\mid k_{ui}$ for $1\le i\le \lambda+\mu$, then $d_{J_u}=\prod_{i=1}^{\lambda+\mu}d_i=d$;
\item[(ii)]If $d_i\nmid k_{ui}$ for some $1\le i\le \lambda+\mu$, then $d_{J_u}=0$.
\end{itemize}

By Theorem \ref{th1}, $G_2$ is  triply transitive and hence $\mathcal{T}(G_2)=\widetilde{\mathcal{T}}(G_2)$.
By Lemma \ref{di}, the result follows.
\end{proof}

\begin{corollary}\cite[Theorem 3.2]{YGZF}
If $M_m$ is the $m$-dimensional complex matrix algebra, then
$$
\begin{aligned}
\mathcal{T}(\mathrm{Dih}(A))\cong &
\left(\bigoplus_{r=1}^{d}M_{d_{I_{r,1}}}\right)\bigoplus\left(\bigoplus_{r=1}^dM_{d_{I_{r,2}}}\right)\bigoplus \left( \bigoplus_{u=1}^{\frac{n-d}{2}}M_{d_{J_u}}\right),
\end{aligned}
$$
where $d=2^{\lambda}$ and $n=|A|$ and
\begin{eqnarray}
d_{I_{r},1}=\begin{cases}
\frac{n}{2}+\frac{3d}{2}, & \text{ if } 2\mid \ell_{ri} \text{ for } 1\le i\le \lambda,\\
\frac{d}{2}, &\text{ if } 2\nmid \ell_{ri} \text{ for some } 1\le i\le \lambda\\
&\text{ and } 2\mid 2^{e_i-1}\ell_{ri} \text{ for } 1\le i\le \lambda,\\
0, &~otherwise,\\
\end{cases}
\nonumber
\end{eqnarray}

and

\begin{eqnarray}
d_{I_{r},2}=\begin{cases}
\frac{n}{2}-\frac{d}{2}, & \text{ if } 2\mid \ell_{ri} \text{ for } 1\le i\le \lambda,\\
\frac{d}{2}, &\text{ if } 2\nmid \ell_{ri} \text{ for some } 1\le i\le \lambda\\
&\text{ and } 2\mid 2^{e_i-1}\ell_{ri} \text{ for } 1\le i\le \lambda,\\
0, &~otherwise,\\
\end{cases}
\nonumber
\end{eqnarray}

and

\begin{eqnarray}
d_{J_u}=\begin{cases}
d,   &\text{ if } 2\mid k_{ui} \text{ for } 1\le i\le \lambda+\mu,\\
0, &~otherwise.\\
\end{cases}
\nonumber
\end{eqnarray}
\end{corollary}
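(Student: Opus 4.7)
The plan is to apply Maleki's Lemma \ref{di} to the centralizer algebra $\widetilde{\mathcal{T}}(D_2)$, which by the triple transitivity established in Theorem \ref{th1} coincides with $\mathcal{T}(D_2)$. Concretely, I must compute, for each of the $2d + \frac{n-d}{2}$ inequivalent irreducible characters of $D_2$ listed in Lemma \ref{tab11}, the multiplicity $d_\chi = \sum_{k=0}^{\ell} \overline{\chi(g_k)}$ with $g_k$ running over the class representatives supplied by Lemma \ref{cc1}, and then invoke the isomorphism $\widetilde{\mathcal{T}}(D_2) \cong \bigoplus_\chi M_{d_\chi}(\mathbb{C})$. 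The whole proof therefore reduces to three explicit character-sum calculations, one each for $\chi_{I_{r,1}}$, $\chi_{I_{r,2}}$, $\varphi_{J_u}$.

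For the one-dimensional $\chi_{I_{r,1}}$ I would split $d_{I_{r,1}} = S_1 + S_2 + S_3$ according to whether the class representative is of type $x_j$, $y_j$, or $z_j b$. The middle piece $S_2$ can be symmetrised: since $d_i \mid s_i - 1$ by construction, one has $\chi_{I_{r,1}}(f(y_j)) = \chi_{I_{r,1}}(y_j)$, so $S_2 = \tfrac{1}{2}\sum_{a \in A \setminus A'} \overline{\chi_{I_{r,1}}(a)}$ and consequently $S_1 + S_2 = \tfrac{1}{2} S_1 + \tfrac{1}{2}\sum_{a \in A} \overline{\chi_{I_{r,1}}(a)}$. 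Each of the three resulting sums factors across the $\lambda+\mu$ primary components into a product of finite geometric sums of roots of unity: the $A'$-sum yields a factor $d_i$ or $0$ according as $d_i \mid n_i \ell_{ri}$; the full-$A$ sum yields $p_i^{e_i}$ or $0$ according as $d_i \mid \ell_{ri}$; and the $C$-sum yields $d_i$ or $0$ according as $d_i \mid \ell_{ri}$. The three-way case split in the statement corresponds exactly to the three possibilities (both divisibilities, only the weaker one, or neither), giving $d_{I_{r,1}} = n/2 + 3d/2$, $d/2$, or $0$ respectively. The computation for $d_{I_{r,2}}$ differs only in the sign of $S_3$ (since $\chi_{I_{r,2}}(z_j b) = -\chi_{I_{r,1}}(z_j b)$), which changes the first case to $n/2 - d/2$ and leaves the others alone.

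The two-dimensional case $d_{J_u}$ is where the real work lies. Since $\varphi_{J_u}$ vanishes on $C$-type classes, the $S_3$-piece drops out automatically; applying the same symmetrisation trick reduces $d_{J_u}$ to
\[
d_{J_u} = \tfrac{1}{2}\Bigl(\sum_{a \in A'} \overline{\varphi_{J_u}(a)} + \sum_{a \in A} \overline{\varphi_{J_u}(a)}\Bigr),
\]
and expanding $\varphi_{J_u}(a) = \prod_i \zeta_{p_i^{e_i}}^{k_{ui} t_i} + \prod_i \zeta_{p_i^{e_i}}^{k_{ui} s_i t_i}$ factors each sum into products of geometric sums. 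The crux is to show that the full-$A$ summand vanishes whenever $J_u \neq (0,\dots,0)$; since $s_i^2 \equiv 1 \pmod{p_i^{e_i}}$ forces $\gcd(s_i, p_i^{e_i}) = 1$, one has $p_i^{e_i} \mid k_{ui} s_i \iff p_i^{e_i} \mid k_{ui} \iff k_{ui} = 0$, and similarly for the other factor, so at the index $i$ where $k_{ui} \neq 0$ both geometric sums vanish. The remaining $A'$-summand reduces to $\prod_i \sum_{t=1}^{d_i} \zeta_{d_i}^{-k_{ui} t}$ and $\prod_i \sum_{t=1}^{d_i} \zeta_{d_i}^{-k_{ui} s_i t}$; because $d_i \mid s_i - 1$, the two products are equal, each factor is $d_i$ or $0$ according as $d_i \mid k_{ui}$, and we obtain $d_{J_u} = d$ or $0$ as claimed. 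Feeding the three families of multiplicities into Lemma \ref{di} completes the proof; the only bookkeeping subtlety is that the half-integer exponent $\ell_{ri} v_i / 2$ appearing in $S_3$ is harmless, because on the branches where $S_3$ survives one has $d_i \mid \ell_{ri}$, which trivialises the offending factor.
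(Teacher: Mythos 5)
Your computation is correct, and it is essentially the paper's own argument --- but for Theorem \ref{th2}, not for this corollary. The paper obtains the $\mathrm{Dih}(A)$ statement purely by specializing Theorem \ref{th2} to $f(a)=a^{-1}$, $y=1$, so that $s_i=-1$, $d_i=\gcd(-2,p_i^{e_i})$ equals $2$ for $i\le\lambda$ and $1$ for $i>\lambda$, hence $d=2^{\lambda}$ and $n_i=2^{e_i-1}$ for $i\le\lambda$; what you have written instead reproves the general theorem via Lemma \ref{di} (symmetrizing the $B_j$-sum to $\tfrac12 S_1+\tfrac12\sum_{A}$, factoring into geometric sums per primary component, and using $\gcd(s_i,p_i^{e_i})=1$ to kill the full-$A$ piece of $d_{J_u}$ --- your divisibility argument there is in fact cleaner than the paper's parity case split). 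The one step your write-up omits is the step the corollary actually consists of: translating the general conditions into the stated ones, e.g.\ ``$d_i\mid\ell_{ri}$ for all $i$'' becomes ``$2\mid\ell_{ri}$ for $1\le i\le\lambda$'' because the condition is vacuous at the odd primes where $d_i=1$. Carrying that out would also have revealed that your (correct) general formula gives $d_{J_u}=d$ iff $2\mid k_{ui}$ for $1\le i\le\lambda$ only, whereas the corollary as printed demands $2\mid k_{ui}$ for all $1\le i\le\lambda+\mu$ --- a misprint in the specialization, since for $i>\lambda$ the condition should again be vacuous. None of this is a gap in your mathematics, but you should add the specialization paragraph so that the proof concludes with the formulas actually being asserted.
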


\begin{corollary}
If $M_m$ is the $m$-dimensional complex matrix algebra, then
$$
\begin{aligned}
\mathcal{T}(\mathrm{Dic}(A,b))\cong &
\left(\bigoplus_{r=1}^2M_{d_{I_{r,1}}}\right)\bigoplus\left(\bigoplus_{r=1}^2M_{d_{I_{r,2}}}\right)\bigoplus \left( \bigoplus_{u=1}^{\frac{n-2}{2}}M_{d_{J_u}}\right),
\end{aligned}
$$
where $2n=|A|$ and
\begin{eqnarray}
d_{I_{r,1}}=\begin{cases}
n+3, & \text{ if } 2\mid \ell_{r1},\\
1, &\text{ if } 2\nmid \ell_{r1} \text{ and } 2\mid 2^{e_1-1}\ell_{r1},\\
0, &~otherwise,\\
\end{cases}
\nonumber
\end{eqnarray}

and

\begin{eqnarray}
d_{I_{r,2}}=\begin{cases}
n-1, & \text{ if } 2\mid \ell_{r1},\\
1, &\text{ if } 2\nmid \ell_{r1} \text{ and } 2\mid 2^{e_1-1}\ell_{r1},\\
0, &~otherwise,\\
\end{cases}
\nonumber
\end{eqnarray}

and

\begin{eqnarray}
d_{J_u}=\begin{cases}
2,   &\text{ if } 2\mid k_{u1},\\
0, &~otherwise.\\
\end{cases}
\nonumber
\end{eqnarray}
\end{corollary}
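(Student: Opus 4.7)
The plan is to derive this corollary as a direct specialization of Theorem \ref{th2}. I would first identify the parameters of the general presentation $D_2 = \langle A, b \mid b^2 = y,\ bab^{-1} = f(a)\rangle$ that yield the generalized dicyclic group $\mathrm{Dic}(A,b)$. Here $f$ is the inversion automorphism, so $s_i \equiv -1$ for every $i \in \{1, \ldots, \lambda + \mu\}$, and $y$ is the unique element of $A$ of order $2$. A finite abelian group possesses a unique element of order $2$ exactly when its Sylow $2$-subgroup is cyclic, so in the primary decomposition (2) of $A$ there is a single even-prime factor, forcing $\lambda = 1$. Computing $d_i = \gcd(s_i - 1, p_i^{e_i})$ with $s_i = -1$ then gives $d_1 = \gcd(-2, 2^{e_1}) = 2$ and $d_i = \gcd(-2, p_i^{e_i}) = 1$ for $i \geq 2$ (the $p_i$ being odd), so $d = \prod_i d_i = 2$.

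Next, I would substitute these parameter values into each of the three piecewise formulas of Theorem \ref{th2}. For every index $i \geq 2$ the divisibility condition $d_i \mid \ell_{ri}$ (or $d_i \mid n_i \ell_{ri}$, or $d_i \mid k_{ui}$) holds automatically since $d_i = 1$, so the multi-index divisibility conditions collapse to their $i = 1$ components: namely $2 \mid \ell_{r1}$, or alternatively $2 \nmid \ell_{r1}$ together with $2 \mid n_1 \ell_{r1} = 2^{e_1 - 1} \ell_{r1}$, and similarly $2 \mid k_{u1}$. Writing $|A| = 2n$ and substituting $d = 2$ into the arithmetic expressions $\frac{|A|}{2} + \frac{3d}{2}$, $\frac{d}{2}$, $\frac{|A|}{2} - \frac{d}{2}$, and $d$ produces the stated entries $n + 3$, $1$, $n - 1$, and $2$. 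The index ranges $r \in \{1, 2\}$ and the range of $u$ follow from the counts $2d = 4$ one-dimensional and $\frac{|A| - d}{2}$ two-dimensional inequivalent irreducible representations supplied by Lemma \ref{dex2}.

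The argument is essentially a bookkeeping specialization with no new ideas, so the one point deserving care is the structural input that $\lambda = 1$. Once the elementary fact on Sylow $2$-subgroups above is invoked, the remainder of the proof is a transparent substitution into Theorem \ref{th2}, and the corollary follows immediately.
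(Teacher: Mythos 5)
Your proposal is correct and matches the paper's (implicit) argument exactly: the paper likewise obtains this corollary as a direct specialization of Theorem \ref{th2}, having already recorded in Section 3 that for $\mathrm{Dic}(A,b)$ one has $s_i=-1$, $\lambda=1$ (since $y$ is the unique involution of $A$) and hence $d=2$, after which the piecewise formulas collapse as you describe. Your count of $\frac{|A|-d}{2}=n-1$ two-dimensional representations is the right one and in fact shows that the upper summation limit $\frac{n-2}{2}$ in the corollary as printed (with $|A|=2n$) should read $n-1$.
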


\begin{corollary}\cite[Theorem 4.2]{YGZF}
If $M_m$ is the $m$-dimensional complex matrix algebra, then
$$
\begin{aligned}
\mathcal{T}(G_2)\cong &
\left(\bigoplus_{r=1}^{d}M_{d_{I_{r,1}}}\right)\bigoplus\left(\bigoplus_{r=1}^dM_{d_{I_{r,2}}}\right)\bigoplus \left( \bigoplus_{u=1}^{\frac{n-d}{2}}M_{d_{J_u}}\right),
\end{aligned}
$$
where $d=\gcd(n,s-1)$ and $n=\frac{|G_2|}{2}$ and
\begin{eqnarray}
d_{I_{r,1}}=\begin{cases}
\frac{n}{2}+\frac{3d}{2}, & \text{ if } d_i\mid \ell_{ri} \text{ for } 1\le i\le 1+\mu,\\
\frac{d}{2}, &\text{ if } d_i\nmid \ell_{ri} \text{ for some } 1\le i\le 1+\mu\\
& \text{ and } d_i\mid n_i\ell_{ri} \text{ for } 1\le i\le 1+\mu,\\
0, &~otherwise,\\
\end{cases}
\nonumber
\end{eqnarray}

and

\begin{eqnarray}
d_{I_{r,2}}=\begin{cases}
\frac{n}{2}-\frac{d}{2}, &\text{ if } d_i\mid \ell_{ri} \text{ for } 1\le i\le 1+\mu,\\
\frac{d}{2}, &\text{ if } d_i\nmid \ell_{ri} \text{ for some } 1\le i\le 1+\mu\\
&\text{ and } d_i\mid n_i\ell_{ri} \text{ for } 1\le i\le 1+\mu,\\
0,&~otherwise,\\
\end{cases}
\nonumber
\end{eqnarray}

and

\begin{eqnarray}
d_{{J_u}}=\begin{cases}
d,   &\text{ if } d_i\mid k_{ui} \text{ for } 1\le i\le 1+\mu,\\
0, &~otherwise.\\
\end{cases}
\nonumber
\end{eqnarray}
\end{corollary}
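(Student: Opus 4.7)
The plan is to invoke Theorem \ref{th1} so that $\mathcal{T}(D_2)=\widetilde{\mathcal{T}}(D_2)$, and then read off the Wedderburn decomposition directly from Lemma \ref{di}. That lemma gives $\widetilde{\mathcal{T}}(D_2)\cong\bigoplus_j M_{d_j}(\mathbb{C})$ with $d_j=\sum_k\overline{\chi_j(g_k)}$, the sum running over one representative from each conjugacy class. The inequivalent irreducible characters are already indexed in three families $\chi_{I_{r,1}},\chi_{I_{r,2}},\varphi_{J_u}$ (the one- and two-dimensional characters supplied after Lemma \ref{dex2}), and the conjugacy class representatives are supplied by Lemma \ref{cc1}: the $d$ singletons $\{x_j\}\subseteq A^{\prime}$, the $\tfrac{n-d}{2}$ pairs $\{y_j,f(y_j)\}$ with $y_j\in A\setminus A^{\prime}$, and the $d$ classes $\mathbf{Cl}(z_jb)$. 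So the task reduces to evaluating three character sums, term by term, using Table 1.

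For $d_{I_{r,1}}$ I would split the sum into an $A_j$-piece, a $B_j$-piece and a $C_j$-piece. Since $A^{\prime}=C(d_1)\times\cdots\times C(d_{\lambda+\mu})$, the $A_j$-piece factors as $\prod_i\sum_{j=1}^{d_i}(\zeta_{d_i}^{-n_i\ell_{ri}})^j$; the $C_j$-piece factors similarly, picking up the extra constant $\prod_i\zeta_{d_i}^{-\ell_{ri}v_i/2}$. For the $B_j$-piece I would use that $\chi_{I,1}(y_j)=\chi_{I,1}(f(y_j))$, so doubling the sum over representatives $y_j\in A\setminus A^{\prime}$ and combining it with (twice) the $A_j$-piece (evaluated as a sum over $A^{\prime}$) produces a sum over all of $A$, giving $\tfrac12\prod_i\sum_{t=0}^{p_i^{e_i}-1}\zeta_{d_i}^{-\ell_{ri}t}=\tfrac12\prod_i\sum_{j=1}^{p_i^{e_i}}(\zeta_{p_i^{e_i}}^{-n_i\ell_{ri}})^j$. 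Standard orthogonality of roots of unity, $\sum_{j=1}^m\zeta_m^{-kj}=m$ if $m\mid k$ and $0$ otherwise, then collapses each product to a case split on the divisibility conditions $d_i\mid\ell_{ri}$ and $d_i\mid n_i\ell_{ri}$; this yields the three-case formula for $d_{I_{r,1}}$. The computation for $d_{I_{r,2}}$ is identical except that the sign of the $C_j$-piece flips (cf.\ Table 1), which changes only the first case from $\tfrac{n}{2}+\tfrac{3d}{2}$ to $\tfrac{n}{2}-\tfrac{d}{2}$.

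The last and most delicate computation is $d_{J_u}$. The $C_j$-piece vanishes immediately since $\varphi_J$ is zero on $C_j$ by Table 1, so only the $A_j$ and $B_j$ pieces contribute, and the same "combine $B_j$ with $A^{\prime}$" trick turns them into a sum of the form $\tfrac12\bigl(\prod_i\sum_{j=1}^{d_i}(\zeta_{d_i}^{-k_{ui}})^j+\prod_i\sum_{j=1}^{d_i}(\zeta_{d_i}^{-k_{ui}s_i})^j\bigr)$ plus an analogous pair of products of length $p_i^{e_i}$ coming from the sum over all of $A$. The main obstacle is showing that this latter "full-$A$" pair vanishes, which is precisely what forces $J_u\neq(0,\ldots,0)$ to be used. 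For this I would exploit that $f$ has order $2$, so $s_i^2\equiv1\pmod{p_i^{e_i}}$: when $p_i=2$, $s_i$ is odd and $p_i^{e_i}\mid k_{ui}s_i\iff k_{ui}=0$; when $p_i$ is odd, $\gcd(s_i-1,s_i+1)$ is coprime to $p_i$, so $p_i^{e_i}$ divides exactly one of $s_i\pm1$ and again $p_i^{e_i}\mid k_{ui}s_i\iff k_{ui}=0$. Since $J_u\neq 0$, there is some $i$ with $k_{ui}\neq0$, and for that $i$ neither $\sum_{j=1}^{p_i^{e_i}}\zeta_{p_i^{e_i}}^{-k_{ui}j}$ nor $\sum_{j=1}^{p_i^{e_i}}\zeta_{p_i^{e_i}}^{-k_{ui}s_ij}$ survives. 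What remains is the pair of length-$d_i$ products, which by the same orthogonality argument evaluates to $d$ when $d_i\mid k_{ui}$ for all $i$ and to $0$ otherwise (using that $s_i$ is a unit mod $d_i$, so $d_i\mid k_{ui}s_i\iff d_i\mid k_{ui}$). Assembling the three multiplicity formulas and plugging them into Lemma \ref{di} gives the claimed Wedderburn decomposition.
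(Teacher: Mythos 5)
Your proposal is correct and follows essentially the same route as the paper: the paper obtains this corollary by specializing Theorem \ref{th2} to cyclic $A$ (so $\lambda=1$ and $d=\gcd(n,s-1)$), and the character-sum computation you describe — triple transitivity from Theorem \ref{th1}, multiplicities $d_j=\sum_k\overline{\chi_j(g_k)}$ from Lemma \ref{di}, the ``combine the $B_j$-sum with the $A'$-sum into a sum over all of $A$'' step, and the vanishing of the full-$A$ contribution for $d_{J_u}$ via $s_i^2\equiv1\pmod{p_i^{e_i}}$ — is exactly the paper's proof of that theorem. No gaps.
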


\section*{Declaration of competing interests}
We declare that we have no conflict of interests to this work.

\section*{Data availability}
No data was used for the research described in the article.

\end{document}